\newcommand{\soc}[1]{\mathrm{soc}(#1)}
\newcommand{\qbinom}[2]{\genfrac{[}{]}{0pt}{}{#1}{#2}}
\newcommand{\F}{\mathbb{F}}
\newcommand{\aut}[1]{\mathrm{Aut}(#1)}
\newcommand{\PGL}{\mathrm{PGL}}
\newcommand{\PSL}{\mathrm{PSL}}
\newcommand{\Pommega}{\mathrm{P}\Omega}
\newcommand{\SL}{\mathrm{SL}}
\newcommand{\PSp}{\mathrm{PSp}}
\newcommand{\R}{\mathbb{R}}
\newcommand{\Z}{\mathbb{Z}}
\newcommand{\U}{\mathrm{PSU}}
\newcommand{\bsum}{\displaystyle\sum}
\newtheorem{thm}{Theorem}[section]
\newtheorem{lemma}[thm]{Lemma}
\newtheorem{prop}[thm]{Proposition}
\theoremstyle{definition}
\newtheorem{rmks}[thm]{Remarks}
\newtheorem{ex}[thm]{Example}
\title{Strongly Regular Graphs of Rank Four}
\author{William H. Allen}
\date{}
\begin{document}

\maketitle 

\begin{abstract}
    \noindent Strongly regular graphs are regular graphs with a constant number of common neighbours between adjacent vertices, and a constant number of common neighbours between non-adjacent vertices. These graphs have been of great interest over the last few decades and often give rise to interesting groups of automorphisms. In this paper we take a reverse approach, and leverage strong classification results on rank four permutation groups to classify the strongly regular graphs which yield such groups as a group of automorphisms.
\end{abstract}

\section{Introduction}
A \emph{regular graph} is a graph on $v$ vertices, such that each vertex has fixed degree $k$. If $\Gamma$ is a regular graph such that every pair of adjacent vertices has $\lambda$ common neighbours, and every pair of non-adjacent vertices has $\mu$ common neighbours, then we say that $\Gamma$ is a \emph{strongly regular graph} with the parameters $(v,k,\lambda, \mu)$. Let $G$ be a group acting transitively on a finite set $\Omega$, so that the action of $G$ on $\Omega$ induces an action of $G$ on $\Omega\times \Omega$ with $r$ orbits (called orbitals); we say that $G$ has \emph{rank} $r$. For each orbital $\Delta$, there is an orbital $\Delta^*$, called the \emph{paired orbital}, where $(\alpha,\beta)\in \Delta^*$ if and only if $(\beta,\alpha)\in \Delta$. If $\Delta=\Delta^*$, one says that $\Delta$ is \emph{self paired}, and the orbital $\{(x,x) : x\in \Omega\}$ is called the \emph{diagonal orbital}. The \emph{orbital graph} associated with an orbital $\Delta$, is the undirected graph with vertex set $\Omega$ and edge set $\Delta\cup \Delta^*$. A well known result of D.G. Higman says that the non-diagonal orbital graphs are connected if and only if the $G$-action on $\Omega$ is primitive. If $G$ acts primitively on $\Omega$ with rank $3$ and has even order, then $G$ has three orbitals $\Delta_0$, $\Delta_1$, $\Delta_2$, the latter two non-diagonal, and the orbital graphs $(\Omega, \Delta_1)$ and $(\Omega, \Delta_2)$ are a complementary pair of strongly regular graphs.

It can very well be the case however, that a strongly regular graph admits a group of automorphisms which is primitive of rank larger than three. In the case where a strongly regular graph has a rank four group of automorphisms, such a graph is necessarily an orbital graph, or its complement. In this paper we classify the strongly regular graphs admitting a non-affine, rank four group of automorphisms. Before stating our main result, we give some examples, found in \cite[3.1.6, 3.1.4, 3.2.4, 4.8]{srg}. \\

\begin{ex}
    \begin{enumerate}[(i)]
        \item \textbf{Nonisotropic unitary graphs:} Let $n\ge 3$, and $V$ be an $n$-dimensional vector space over $\F_{q^2}$, where $q$ is a prime power, and $h:V\times V\to \F_{q^2}$ an associated, non-degenerate hermitian form. Define $NU_n(q)$ to be the graph whose vertices are the non-singular $1$-spaces of $V$, where two vertices are joined by an edge if and only if they are joined by a tangent; that is, the projective line passing through the two projective points meet the hermitian variety $H = \{\braket{v} : h(v,v)=0\}$ in precisely one point. The graph $NU_n(q)$ is strongly regular with parameters $(v,k,\lambda,\mu)$ given by:
        \begin{align*}
            v &= \frac{q^{n-1}(q^n-(-1)^n)}{q+1} \\
            k &= (q^{n-1}+(-1)^n)(q^{n-2}-(-1)^n) \\
            \lambda &= q^{2n-5}(q+1)-(-1)^nq^{n-2}(q-1) - 2 \\
            \mu &= q^{n-3}(q+1)(q^{n-2}-(-1)^n).
\end{align*}

        \item \textbf{Nonisotropic orthogonal graphs:} Let $n=2m+1$ with $m\ge 1$, and $V$ be an $n$-dimensional vector space over $\F_q$, with $q$ odd, and $Q:V\to \F_q$ an associated non-degenerate quadratic form. For $\epsilon=\pm$, let $\Omega^\epsilon$ be the set of nonsingular $1$-spaces with perpendicular space of type $\mathrm{O}_{2m}^\epsilon(q)$. Define $NO_n^\epsilon(q)$ to be the graph on $\Omega^\epsilon$, where two vertices are joined by an edge if and only if they meet at a tangent.
        The graph $NO_n^\epsilon(q)$ is strongly regular with parameters
        \begin{align*}
            v & = \frac{1}{2}q^m(q^m+\epsilon) \\
            k &= (q^{m-1}+\epsilon)(q^m-\epsilon) \\
            \lambda &= 2(q^{2m-2}-1)+\epsilon q^{m-1}(q-1) \\
            \mu &= 2q^{m-1}(q^{m-1}+\epsilon).
    \end{align*}
    \item \textbf{Eight dimensional orthogonal polar graphs:} Let $K = \Pommega_8^+(q)$, and $P_1$ be a maximal parabolic subgroup stabilising a singular $1$-space. The group $K$ acts on the cosets $K/P_1$ with rank $3$, and one of orbital graphs is the polar graph (where two vertices are joined whenever perpendicular), whose complement $\Gamma$ is strongly regular with parameters $((q^3+1)(q^2+1)(q+1), q^6, q^2(q-1)(q^3-1), q^5(q-1))$. The group $K$ has a rank $4$ subgroup $G=\Omega_7(q)$ acting on the cosets $G/P_3$ with rank $4$, which has $\Gamma$ as an orbital graph \cite[3.2.4]{srg}. 
    
    \item \textbf{Seven dimensional orthogonal polar graphs:} Let $K=\Omega_7(q)$ and $P_1$ be a maximal parabolic stabilising a singular $1$-space. The action of $K$ on the cosets $K/P_1$ has rank $3$, and one of the orbital graphs is the polar graph; the complement of which, $\Gamma$, is strongly regular with parameters $((q^3+1)(q^2+1)(q+1), q^6, q^2(q-1)(q^3-1), q^5(q-1))$. The group $K$ has a subgroup $G=G_2(q)$ acting on $G/P_1$ with rank $4$, which has $\Gamma$ as one of its orbital graphs \cite[3.2.4]{srg}.

    \item \textbf{Distance three graphs of symplectic dual polar graphs:} Let $G = \PSp_6(q)$, $q$ be an odd prime power, and $P_3$ be a maximal parabolic stabilising an isotropic $3$-space. The group $G$ acts on the cosets $G/P_3$ with rank four, and one of the orbital graphs is a dual polar graph. The distance three graph of this dual polar graph (where adjacency is given by joining two vertices at distance three) is strongly regular with parameters $((q^3+1)(q^2+1)(q+1),q^6,q^2(q-1)(q^3-1),q^5(q-1))$ \cite[3.2.4]{srg}.

    \item \textbf{Distance three graphs of} $G_2(q)$\textbf{ actions:} Let $G = G_2(q)$ with $q\ne 3^a$, and $P_2$ be the maximal parabolic corresponding to the short root of $G$. Let $G_{2,2}$ denote the distance transitive orbital graph corresponding to the action of $G$ on $G/P_2$ as defined in \cite[Table 10.8]{drg}.  The distance three graph of $G_{2,2}$ is strongly regular with parameters $(\frac{q^6-1}{q-1},q^5,q^4(q-1),q^4(q-1))$. 
    \end{enumerate}
\end{ex}
\vspace{2mm}
\begin{thm}
    Let $\Gamma$ be a strongly regular graph with a group of automorphisms $G\le\aut{\Gamma}$ and point stabiliser $H$, such that $G$ is a non-affine primitive permutation group of rank four, and suppose that $\Gamma$ is an orbital graph for $G$. Then one of the following holds:
    \begin{enumerate}[(i)]
        \item The graph $\Gamma$ is one of $NU_n(3)$, $NU_n(4)$, or $NO_{2m+1}^\pm(5)$, and $G\,\triangleright \U_n(3)$, $\U_n(4)$, or $\Pommega_{2m+1}(5)$ respectively.
        \item The graph $\Gamma$ is the complement of the polar graph of $O_8^+(q)$, and $G \, \triangleright \Omega_7(q)$ is in its action on $\Omega_7(q)/P_3$.
        \item The graph $\Gamma$ is the complement of the polar graph of $O_7(q)$ and $G\, \triangleright G_2(q)$ is in its action on $G_2(q)/P_1$.
        \item The graph $\Gamma$ is the distance $3$ graph of the dual polar graph of $\PSp_6(q)$ with $q$ odd, and $G\, \triangleright \PSp_6(q)$ is in its action on $\PSp_6(q)/P_3$.
        \item The graph $\Gamma$ is the distance $3$ graph of $G_{2,2}$, and $G\,\triangleright G_2(q)$ is in its action on $G_2(q)/P_2$, where $q\ne 3^a$ for a positive integer $a$.
        \item The group $G$, point stabiliser $H$, and the parameters of $\Gamma$ belong to Table $1$.
    \end{enumerate}
\end{thm}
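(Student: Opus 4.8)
The plan is to treat the statement as a two-stage reduction: first pin down the groups $G$ that can occur, then for each determine which orbital graph, if any, is strongly regular. For the first stage I would invoke the classification of non-affine primitive permutation groups of rank four, which via the O'Nan--Scott theorem together with the classification of finite simple groups restricts the socle and point stabiliser $H$ to an explicit list. Before reaching the almost simple case I would dispose of the remaining O'Nan--Scott types: for groups of simple diagonal, product-action, or twisted-wreath type the rank is forced down to four in only a handful of configurations, each of which can be checked directly to produce no strongly regular orbital graph or to land in Table $1$. This leaves the almost simple groups, where the cited rank-four classification supplies finitely many families $(G,H)$ parametrised by a prime power $q$ (and occasionally a dimension $n$ or $m$), together with finitely many sporadic pairs.

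Since $G$ has rank four, its orbitals $\Delta_0,\Delta_1,\Delta_2,\Delta_3$ with $\Delta_0$ diagonal form a homogeneous coherent configuration with three non-trivial classes, and the orbital graphs arise by symmetrising, i.e.\ from the relations $A_i+A_{i^*}$. An orbital graph $\Gamma$ with adjacency matrix $A$ is strongly regular precisely when $A^2$ lies in the span of $I$, $A$ and $J$; equivalently, the symmetrised relations other than $\Delta_0$ and the chosen one must fuse, so that the number of common $\Gamma$-neighbours of a pair of vertices is constant across each of the remaining classes. In intersection-number terms this is a single equality $p_{ii}^{\,j}=p_{ii}^{\,k}$ read off the symmetrised scheme, where $i$ indexes the chosen orbital and $j,k$ the other two; equivalently the symmetrised adjacency matrix must have exactly three of the four eigenvalues afforded by the scheme. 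Either formulation reduces the question, for fixed $(G,H)$, to computing a short list of structure constants or eigenvalues.

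The bulk of the work is then to carry this out family by family. For the classical cases the orbitals and their parameters are read from the geometry: the nonsingular $1$-spaces of a unitary or odd orthogonal space under $\U_n(q)$ or $\Pommega_{2m+1}(q)$, graded by the type of the spanned line relative to the form, yield $NU_n(q)$ and $NO^{\pm}_{2m+1}(q)$, and evaluating the strong regularity condition singles out exactly $q=3,4$ in the unitary case and $q=5$ in the orthogonal case, giving (i). For the parabolic actions of $\Omega_7(q)$, $G_2(q)$ and $\PSp_6(q)$ the relevant orbital graphs are distance-regular of diameter three, so the symmetrised scheme is metric; here I would identify each graph with its entry in the standard tables of distance-regular graphs, compute the eigenvalues $\theta_0>\theta_1>\theta_2>\theta_3$, and observe that the distance-three graph $\Gamma_3=p(A)$ for a fixed cubic $p$ has eigenvalues $p(\theta_i)$, two of which coincide. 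This produces the strongly regular graphs of (ii)--(v), all of (ii)--(iv) sharing the parameters $((q^3+1)(q^2+1)(q+1),q^6,q^2(q-1)(q^3-1),q^5(q-1))$. The finitely many sporadic pairs are checked individually and the survivors recorded in Table $1$, completing (vi).

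The main obstacle is the explicit orbital analysis for the exceptional and parabolic actions. Unlike the rank-three situation a rank-four scheme need not fuse to a strongly regular graph, and deciding this requires the full subdegree sequence and several intersection numbers, which for $G_2(q)$ and the parabolic $\PSp_6(q)$ action are not immediate and must be extracted from the building geometry and BN-pair structure alongside the known distance-regular data. I expect the delicate points to be (a) verifying that \emph{no other} orbital graph besides the distance-three graph is strongly regular in these actions, which needs the complete eigenvalue list rather than a single coincidence, and (b) confirming that the families in (ii)--(iv) yielding identical parameters are correctly matched and that the sporadic list is exhaustive. Handling these uniformly, rather than grinding through each action separately, is where most of the care will be needed.
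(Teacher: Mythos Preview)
Your overall strategy matches the paper's: invoke the classification of non-affine rank-four primitive groups, then analyse each family's orbital graphs for strong regularity via the intersection numbers of the associated coherent configuration. Your eigenvalue reformulation for the distance-regular parabolic cases (checking whether two of the $p(\theta_i)$ coincide) is equivalent to the paper's approach of computing all $p_{ij}^h$ from the intersection array via the standard recursion and testing $p_{ii}^{\,j}=p_{ii}^{\,k}$ directly; neither buys anything the other does not.

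There is, however, a genuine gap in your case enumeration. You treat the infinite families as consisting of the nonsingular $\U_n(q)$ and $\Pommega_{2m+1}(q)$ actions together with the three parabolic actions of $\Omega_7(q)$, $G_2(q)$ and $\PSp_6(q)$, relegating everything else to ``finitely many sporadic pairs''. But the rank-four classification (Table~2 in the paper) contains several further infinite families you do not mention: $A_n$ on $3$-subsets; $\PSL_n(q)/P_3$; $\PSL_3(q)/P_{1,2}$ with a graph automorphism; $\U_n(q)/P_3$ for $n\in\{6,7\}$; $\Pommega_{2m}^+(q)/P_m$ for $m\in\{6,7\}$; $\Pommega_{2m}^\pm(q)/N_1$ for $q\in\{4,5\}$; $E_7(q)/P_1$; and ${}^3D_4(q)/P_i$. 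Most of these contribute nothing, but they must still be eliminated, and two of them \emph{do} feed Table~1: the $A_n$ family at $n\in\{7,10\}$ (generalised Johnson graphs), and $\PSL_3(q)/P_{1,2}$ at $q=4$.

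The $\PSL_3(q)/P_{1,2}$ flag action is the one that actually breaks your template rather than merely being omitted from it: none of its orbital graphs is distance-transitive of diameter three (the subdegrees are $1,\,2q,\,2q^2,\,q^3$ after fusion by the graph automorphism), so your eigenvalue-via-distance-regularity argument does not apply and a direct combinatorial count on incident point--line pairs is needed. Likewise, the even-dimensional orthogonal $N_1$ action is a nonsingular-subspace family you omit; it requires its own quadratic-form computation and is infinite in $m$, so it cannot be absorbed into the sporadic bucket either.
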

\begin{table}[h]
\begin{center}
    \begin{tabular}{|c|c|c|}
    \hline
    $S=\soc{G}$ & $S\cap H$ & Parameters \\
    \hline
    $A_7$ & $A_7\cap (S_3\times S_4)$ & $(35,18,9,9)$ \\
    $A_{10}$ & $A_{10}\cap (S_3\times S_7)$ & $(120,63,30,36)$ \\
    \hline
    $\PSL_2(8)^2$ & $D_{18}\times D_{18}$ & $(784,243,82,72)$ \\
    $\PSL_3(4)$ & $P_{1,2}$ & $(105,32,4,12)$ \\
    \hline
    $\U_3(3)$ & $\PSL_2(7)$ &  $(36,14,4,6)$ \\
    $\U_3(5)$ & $A_6.2$ &  $(175,72,20,36)$ \\
    $G_2(3)$ & $\PSL_3(3).2$ &  $(378,117,36,36)$ \\
    $G_2(4)$ & $\SL_3(4).2$ &  $(2080,1008,480,496)$ \\
    $G_2(5)$ & $\mathrm{SU}_3(5).2$ & $(7750,1575,300,325)$ \\
    $^2F_4(2)'$ & $\PSL_3(3).2$ & $(1600,351,94,72)$ \\
    \hline
\end{tabular}
\caption{The exceptional rank four strongly regular graphs}
\end{center}
\end{table}
\vspace{2mm}
\begin{rmks}
    \begin{enumerate}[(i)]
        \item All of the groups $G$ in Table $1$ are almost simple, except for in the third entry, where the socle of $G$ is $\PSL_2(8)^2$; in this case $G\cong \PSL_2(8)^2.6 < \PSL_2(8)\wr S_2$ is in its product action of degree $28^2$. The graph $\Gamma$ is constructed as in \cite[Proposition 8.11.2]{srg} using the Mathon scheme on $28$ points. 
        \item It can be the case that $\aut{\Gamma}$ has rank $3$; this happens in cases (ii) and (iii) of Theorem $1.2$ and for the groups with $\soc{G}\cong G_2(r)$ with $r\in \{3,4\}$ in Table $1$. In the latter cases, $\aut{\Gamma}$ has rank $3$ with socle $\Omega_7(r)$.
        \item In the cases in Table $1$ where $S=A_n$, with $n\in \{7,10\}$, the group $G$ acts on $X :=\Omega_3\times \Omega_3$, where $\Omega_3$ is the set of $3$-element subsets of $\{1,\ldots,n\}$. The $4$ orbitals for the action of $G$ on $X$ are given by
\[
    \Delta_i := \{(A,B)\in \Omega_3\times\Omega_3 : |A\cap B| = i\}
\]
    The graph $J(n,3,i)$ associated with each orbital $\Delta_i$ is called a \emph{generalised Johnson Graph}. The graphs in Table $1$ are $J(n,3,1)$.
    \end{enumerate}
\end{rmks}
\vspace{2mm}
In Section $2.1$, we define all necessary notation involved in the proof, in Section $2.2$, we state the classification of non-affine rank four groups, and in Section $2.3$, we cover all computational tools used. Section $3$ is dedicated to the proof of Theorem $1.2$. 

\textbf{Acknowledgements:} The author would like to thank the Heilbronn Institute for the funding of the author's PhD Thesis, which the research conducted in this paper is a part of. A special appreciation is given to the author's supervisor, Professor Martin Liebeck, who has been of great help with the production of this paper. Finally, the author would like to acknowledge Professor L.H Soicher, for his help in Section $2.3$.

\section{Preliminaries}

\subsection{Distance regular graphs and notation}
A finite, simple, connected graph $\Gamma$, is said to be \emph{distance regular} with \emph{parameters} $a_i$, $b_i$, and $c_i$ if for any two vertices $x$, $y$ with $d(x,y)=i$, the number of vertices $z$, which are adjacent to $y$, and with $d(x,z)=i-1$, $i$, and $i+1$, are respectively $a_i$, $b_i$, and $c_i$, where $d : \Gamma^2\to \Z$ is the distance function. Let $l$ be the diameter of $\Gamma$. The \emph{intersection array} $\{b_0,\ldots,b_{l-1} ; c_1,\ldots,c_l\}$, is the symbol which suffices to obtain all parameters. A distance regular graph with valency $k=b_0$ satisfies $a_i+b_i+c_i = k$ for all $i$. Given a vertex $x$ in a distance regular graph, one writes $k_i$ to denote the number of vertices at distance $i$ from $x$, where $k_0=1$, and $k_{i+1}=k_ib_i/c_{i+1}$. The graph $\Gamma$ is said to be \emph{distance transitive} if, for any pairs of vertices $(x_0,y_0)$, and $(x_1,y_1)$ with $d(x_0,y_0)=d(x_1,y_1)$, there is a graph automorphism taking one pair to the other.

Given a primitive permutation group acting on a set $\Omega$ with rank $r+1$, let $\Delta_0$, $\ldots$, $\Delta_r$ be its orbitals and take vertices $x$ and $y$, with $(x,y)\in \Delta_h$, where $h\in\{0,\ldots,r\}$. We define the \emph{intersection number} $p_{ij}^h$ to be
\[
    p_{ij}^h :=\#\{z\in \Omega : (x,z) \in \Delta_i, \, (z,y)\in\Delta_j\}.
\]
\begin{lemma}\label{lem:example}
    \textup{(\cite[Proposition 1.3.1]{srg})}. Let $\Delta_0$ denote the diagonal orbital. The intersection numbers $p_{ij}^h$ satisfy the following relations:
    \begin{enumerate}[(i)]
        \item $p_{0j}^h=\delta_{jh}$, $p_{ij}^0=\delta_{ij}k_j$, $p_{ij}^h=p_{ji}^h$,
        \item $\bsum_i\, p_{ij}^h = k_j$, $\bsum_j\, k_j = v$,
        \item $p_{ij}^hk_h = p_{ih}^jk_j$,
        \item $\bsum_l\, p_{ij}^lp_{hl}^m = \bsum_{l}\, p_{hj}^lp_{il}^m$.
    \end{enumerate}
\end{lemma}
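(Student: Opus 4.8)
The plan is to pass to the Bose--Mesner algebra of the scheme determined by the orbitals. For each $i$ let $A_i$ be the $v\times v$ matrix indexed by $\Omega$ with $(A_i)_{xy}=1$ when $(x,y)\in\Delta_i$ and $0$ otherwise. Since $G$ is transitive on each orbital $\Delta_h$, the quantity $(A_iA_j)_{xy}=\#\{z:(x,z)\in\Delta_i,\ (z,y)\in\Delta_j\}$ is constant as $(x,y)$ ranges over $\Delta_h$ (the matrices $A_i$, and hence their products, are fixed by the $G$-action permuting rows and columns); this is exactly the assertion that $p_{ij}^h$ is well defined, and it yields the fundamental identity $A_iA_j=\sum_h p_{ij}^h A_h$. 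Three structural facts then drive everything: $A_0=I$ because $\Delta_0$ is the diagonal; $\sum_i A_i=J$, the all-ones matrix, because the orbitals partition $\Omega\times\Omega$; and each $A_i$ is symmetric (the orbitals in question are self-paired), so $A_iA_j=(A_iA_j)^{\top}=A_j^{\top}A_i^{\top}=A_jA_i$, whence the algebra is commutative. Note also that $k_j$ is the common row sum of $A_j$, and that the $A_h$ are linearly independent since they have pairwise disjoint supports, so coefficients in the expansion above may be read off unambiguously.

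The four relations are now formal consequences. For (i), the identity $A_0A_j=A_j=\sum_h\delta_{jh}A_h$ gives $p_{0j}^h=\delta_{jh}$; evaluating $(A_iA_j)_{xx}$ at a diagonal pair $x=y$ forces $(x,z)\in\Delta_i$ and (using symmetry) $(x,z)\in\Delta_j$, so the count vanishes unless $i=j$ and otherwise equals $k_i=k_j$, giving $p_{ij}^0=\delta_{ij}k_j$; and $p_{ij}^h=p_{ji}^h$ is the commutativity just established. For (ii), multiplying $\sum_iA_i=J$ on the right by $A_j$ and using $JA_j=k_jJ=k_j\sum_hA_h$ yields $\sum_i p_{ij}^h=k_j$ on comparing coefficients, while $\sum_j k_j=v$ is the row-sum identity for $\sum_jA_j=J$. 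Relation (iii) I would obtain by double counting the set $\{(x,y,z):(x,z)\in\Delta_i,\ (z,y)\in\Delta_j,\ (x,y)\in\Delta_h\}$: fixing $(x,y)\in\Delta_h$ first ($vk_h$ choices) gives $vk_h\,p_{ij}^h$, while fixing $(z,y)\in\Delta_j$ first ($vk_j$ choices) and rewriting $(x,z)\in\Delta_i$ as $(z,x)\in\Delta_i$ via symmetry gives $vk_j\,p_{ih}^j$; equating and cancelling $v$ yields $p_{ij}^h k_h=p_{ih}^j k_j$. Finally (iv) is associativity together with commutativity: expanding $A_h(A_iA_j)$ shows the coefficient of $A_m$ is $\sum_l p_{ij}^l p_{hl}^m$, while expanding $A_i(A_hA_j)$ gives coefficient $\sum_l p_{hj}^l p_{il}^m$, and since $A_h(A_iA_j)=A_hA_iA_j=A_iA_hA_j=A_i(A_hA_j)$ the two coefficients agree.

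The genuine content sits entirely in the opening observations rather than in any of the four derivations, which are bookkeeping once the framework is set up. The main point requiring care is the pair of foundational facts: well-definedness of $p_{ij}^h$, which rests on transitivity of $G$ on each orbital, and the symmetry $A_i^{\top}=A_i$, which underwrites commutativity and is used in (i), (iii) and (iv). In the fully general (possibly non-self-paired) setting one would instead have $A_i^{\top}=A_{i^{*}}$ and the relations would acquire paired-orbital indices $i^{*}$; the clean form stated here is exactly the symmetric case, which is the one relevant to the undirected orbital graphs under study.
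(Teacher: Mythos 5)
First, a point of comparison: the paper contains no proof of this lemma at all --- it is quoted directly from \cite[Proposition 1.3.1]{srg} --- so your argument stands on its own rather than paralleling or diverging from an in-paper one. That said, your Bose--Mesner algebra proof is the standard route and is essentially correct: well-definedness of $p_{ij}^h$ from transitivity of $G$ on each orbital, the expansion $A_iA_j=\sum_h p_{ij}^hA_h$, then (i) from $A_0=I$ and evaluation on the diagonal, (ii) from $JA_j=k_jJ$, (iii) by double counting triples, and (iv) from associativity, with coefficients compared via linear independence of the $A_h$. Your closing caveat is also exactly the right one to make: the relations as stated (notably $p_{ij}^0=\delta_{ij}k_j$ and $p_{ij}^h=p_{ji}^h$) are those of a \emph{symmetric} scheme and so presuppose self-paired orbitals; this is not automatic for a rank four primitive group, but it does hold in every situation where the paper invokes the lemma (there the orbitals are either the distance relations of a distance-transitive graph of diameter three, or are defined by visibly symmetric conditions on bilinear or hermitian forms).

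One step, however, is circular as written and needs repair. In the chain $A_iA_j=(A_iA_j)^{\top}=A_j^{\top}A_i^{\top}=A_jA_i$, the first equality does not follow from symmetry of $A_i$ and $A_j$ alone: a product of two symmetric matrices is symmetric if and only if they commute, which is precisely what the chain is meant to establish. The fix costs one line using material you already have: by the fundamental identity, $A_iA_j=\sum_h p_{ij}^hA_h$ is a linear combination of the symmetric matrices $A_h$, hence is itself symmetric, and only then does transposing yield $A_jA_i$. With that dependence made explicit --- commutativity is deduced from the expansion in the symmetric basis, not from symmetry of the factors --- the proof is complete.
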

In particular, when $\Gamma$ is distance transitive, we have the following:
\[
    p_{1j}^h = \begin{cases}
        c_i & \text{if } h = i-1 \\
        a_i & \text{if } h = i \\
        b_i & \text{if } h = i+1 
    \end{cases}
\]
so the intersection array may be rewritten in terms of the intersection numbers. 

If $G$ is a group of Lie type and $\Pi$ is its Dynkin diagram, then we write $P_I$ to denote the parabolic subgroup obtained by deleting the collection of roots $I\subseteq \Pi$ from the Dynkin diagram. If $G$ is a classical group with associated vector space $V$, and form $\kappa$, then we write $N_1$ to denote the stabiliser of a nonsingular $1$-space with respect to the form $\kappa$. If $G=\mathrm{O}_{2m+1}(q)$ with $q$ odd and $v$ a nonsingular vector in $V$, the stabiliser of $\braket{v}$ is denoted by $N_1^\epsilon$, with $\epsilon=\pm$, where $v^\perp$ is of type $\mathrm{O}_{2m}^\epsilon (q)$.

\subsection{The classification of non-affine rank four groups}
For the proof of Theorem $1.2$, we require a classification of primitive non-affine rank four groups. Let $\mathcal{G}$ be the set of non-affine primitive permutation groups of rank four. For $G$ not almost simple, the rank four groups are determined in \cite{cuypers}. In the case that $G\in \mathcal{G}$ has a sporadic or alternating socle, all such $G$ have been classified by Muzychuk and Spiga in \cite{muzy}. The non-affine primitive permutation groups with Lie type socle, of rank at most five have been classified by Cuypers in his PhD Thesis \cite{cuypers}. What remains is to pick out which $G$ in Cuypers' list have rank four; this was done by straightforward refinements of Cuypers' proofs. The rank four groups with linear socle can be read off from \cite{vauk}.
\vspace{2mm}
\begin{thm}
    Suppose that $G\in \mathcal{G}$, the set of non-affine rank $4$ primitive groups. Then one of the following holds:
    \begin{enumerate}[(i)]
        \item The group $G$ is of simple diagonal type, and has socle isomorphic to $A_5\times A_5$.
        \item The group $G$ has socle $T^2:=\PSL_2(8)^2$, and $T^2\le G\le K\wr S_2$ is of product type, where $K\cong \PSL_2(8).3$ acts $2$-transitively on $28$ points and $G\cong \PSL_2(8)^2.6$.
        \item There is an almost simple group $K$, acting $2$-transitively on a set $\Delta$, with socle $T$, and the group $G$ acts in its natural product action on $\Omega =  \Delta^3$ with $T^3\le G\le K\wr S_3$.
        \item The group $G$ is almost simple with point stabiliser $H$ and socle $S$, such that $S$ and $H\cap S$ belong to Table $2$ or Table $3$.
    \end{enumerate}
\end{thm}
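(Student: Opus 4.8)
The plan is to apply the O'Nan--Scott theorem to a non-affine primitive group $G \in \mathcal{G}$ and to treat each surviving structural type in turn. Since the affine (holomorph of abelian) type is excluded by hypothesis, the possibilities are almost simple, diagonal type, twisted wreath type, and product action type, and in every case the rank is tightly controlled by the socle together with the induced top group. The strategy is therefore to reduce each type to an existing classification and then to extract the members of rank exactly four.

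The non-almost-simple types I would handle together by invoking Cuypers' determination \cite{cuypers}, though the shape of the answer can be anticipated directly. Twisted wreath groups do not occur, their point stabilisers having too many suborbits on the regular socle for the rank to be as small as four. For simple diagonal type with socle $T^m$ the non-trivial suborbits of $G$ on $\Omega \cong T^{m-1}$ are indexed by $\aut{T}$-classes of tuples, further identified under the factor action of $S_m$; for $m \ge 3$ this count exceeds four, while for $m = 2$ it equals four precisely when $T = A_5$, where $\out{A_5}$ fuses the two classes of $5$-cycles, giving conclusion (i), and compound diagonal type is excluded in the same way. For product action type, $G \le K \wr S_m$ acts on $\Delta^m$ with $K$ primitive of socle $T$ on $\Delta$: when $K$ is $2$-transitive and $S_m$ is induced in full the suborbits are indexed by the number of coordinates in which a pair of $m$-tuples differ, so the rank is $m+1$ and $m = 3$ yields conclusion (iii); the only remaining rank four case has $m = 2$ with the socle component $T = \PSL_2(8)$ \emph{not} $2$-transitive on $28$ points inside $K = \PSL_2(8).3$, producing extra suborbits, and $G \cong \PSL_2(8)^2.6$, which is conclusion (ii).

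For the almost simple case, with socle $S$ and point stabiliser $H$, I would split according to whether $S$ is of Lie type. When $S$ is alternating or sporadic the complete list of rank four primitive groups is given by Muzychuk and Spiga \cite{muzy}; when $S$ is a linear group the rank four actions are read off from \cite{vauk}. For the remaining groups of Lie type I would begin from Cuypers' classification of almost simple primitive groups of Lie type of rank at most five \cite{cuypers} and refine his arguments to retain exactly the rank four members, recording each admissible pair $(S, H\cap S)$ together with the permissible overgroups $G$ with $S \trianglelefteq G \le \aut{S}$. Assembling these outcomes gives Tables $2$ and $3$, hence conclusion (iv).

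The principal obstacle is the almost simple Lie type case. Cuypers' lists are stated up to rank five, so isolating precisely the rank four groups, and certifying that none are missed, requires recomputing the number of $H$-orbits on $S/H$ (equivalently decomposing the relevant permutation character) for each candidate, and then determining which extensions $G$ with $S \trianglelefteq G \le \aut{S}$ preserve the rank. A secondary difficulty is fixing the exact isomorphism type of the top groups in the non-almost-simple conclusions; in particular one must verify that the order-six extension of $\PSL_2(8)^2$ in conclusion (ii) is the one keeping the product action at rank four rather than collapsing it to rank three.
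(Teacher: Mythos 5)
Your proposal is correct and takes essentially the same route as the paper: both reduce via the O'Nan--Scott types and then assemble the result from the same three sources --- Cuypers' thesis \cite{cuypers} for the non-almost-simple groups and the Lie type socles (refining his rank-at-most-five lists to rank exactly four), Muzychuk--Spiga \cite{muzy} for alternating and sporadic socles, and Vauhkonen \cite{vauk} for linear socles. Your additional direct counting sketches for the diagonal and product action types (suborbits indexed by fused conjugacy classes, respectively by Hamming distance) are a supplement to, not a departure from, the paper's citation-based argument.
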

\begin{table}[h]
    \centering
    \begin{tabular}{|c|c|c|}
    \hline
       $S$ & $H\cap S$ & Restrictions \\
    \hline
    $A_n$ & $A_n\cap (S_3\times S_{n-3})$  & $n\ge 7$ \\
    \hline
        $\PSL_n(q)$ & $P_3$ & $n \ge 6$\\
        $\PSL_3(q)$ & $P_{1,2}$ & $G$ contains a graph aut \\
    \hline
       $\PSp_6(q)$ & $P_3$ &  $q$ odd\\
       \hline
       $\U_{n}(q)$ & $N_1$ & $n\ge 3$, $q\in\{3,4\}$ and $G$ contains graph aut if $q=4$ \\
       $\U_n(q)$ & $P_3$ & $n\in\{6,7\}$ \\
       \hline
       $\Pommega_{7}(q)$ & $P_3$ & \\
       $\Pommega_{2m}^+(q)$ & $P_m$ & $m\in\{6,7\}$ \\
       $\Pommega_{2m+1}(5)$ & $N_1^{\pm}$ & $m\ge 2$\\
       $\Pommega_{2m}^{\pm}(q)$ & $N_1$ & $m\ge 3$, $q\in\{4,5\}$ \\
       \hline
       $G_2(q)$ & $P_1$, $P_2$ & \\
       $E_7(q)$ & $P_1$ & \\
       $^3D_4(q)$ & $P_1$, $P_2$ & \\
       \hline
    \end{tabular}
    \caption{The families of $G\in \mathcal{G}$ with $G$ almost simple}
    \label{tab:my_label}
\end{table}

\begin{table}[!h]
    \centering
    \begin{tabular}{|c|c|c|}
    \hline
       $S$ & $H\cap S$ & Restrictions \\
    \hline
    $A_5\times A_5$ & $A_5$ & \\
    $\PSL_2(8)^2.6$ & $D_{18}\times D_{18}$ & \\
    \hline
    $\PSL_2(q)$ & $D_{q+1}$ & $q\in\{7,9\}$ \\
    $\PSL_2(q)$ & $D_{2(q+1)}$ & $q\in\{8,16,32\}$ \\
    $\PSL_2(16)$ & $A_5$ &  \\
    $\PSL_2(25)$ & $S_5$ &  \\
    $\PSL_3(4)$ & $\PSL_3(2)$ &  \\
    $\PSL_4(q)$ & $\PSp_4(q)$ & $q\in\{4,5\}$ \\
    $\PSL_2(19)$ & $A_5$ & \\
    $\PSp_6(4)$ & $G_2(4)$ & \\
    $\U_3(3)$ & $\PSL_2(7)$ & \\
    $\U_3(3)$ & $4.S_4$ & \\
    $\U_4(q)$ & $\PSp_4(q)$ & $q\in\{4,5\}$ \\
    $\U_6(2)$ & $\PSp_6(2)$ & \\
    $\U_3(5)$ & $A_6.2$ & \\
    $\Pommega_{7}(5)$ & $G_2(5)$ & \\
    $\Pommega_{7}(3)$ & $\PSp_6(2)$ & \\
    $\Pommega_{8}^+(2)$ & $A_9$ & \\
    $G_2(q)$ & $\PSL_3(q).2$ & $q\in\{3,4\}$ \\
    $G_2(5)$ & $\U_3(5).2$ & \\
    $^2F_4(2)'$ & $\PSL_3(3).2$ & \\
    \hline
    $M_{11}$ & $S_5$ & \\
    $M_{12}$ & $\PSL_2(11)$ & \\
    $M_{22}$ & $2^4:S_5$ & \\
    $M_{23}$ & $A_8$ & \\
    $M_{23}$ & $M_{11}$ & \\
    $M_{24}$ & $2^4:A_8$ & \\
    $M_{24}$ & $2^6:3.S_6$ & \\
    $Co_1$ & $Co_2$  & \\
    $J_2$ & $3.\PGL_2(9)$ & \\
    $McL$ & $M_{22}$ & \\
    $He$ & $\PSp_4(4):4$ & \\
    $Fi_{22}$ & $\Pommega_8^+(2):3$ & \\
    \hline
    $M_{10}$ & $5:4$ & \\
    $A_{12}$ & $M_{12}$ & \\
    $A_{2r}$ & $A_{2r}\cap (S_r\wr S_2)$ & $r\in \{6,7\}$ \\
    \hline
    $^2E_6(2)$ & $F_4(2)$ & \\
    \hline
    \end{tabular}
    \caption{The groups $G\in \mathcal{G}$ not belonging to an infinite family}
    \label{tab:my_label}
\end{table}

\subsection{Computational tools}
In the case where $G$ belongs to Table $3$, it is possible to evaluate the parameters of the associated orbital graphs computationally; in each such case, the GAP programming language \cite{gap} is used. Using the GRAPE package functions \cite[2.8]{grape}, \cite[4.3]{grape} in GAP, one may check whether the orbital graphs of $G$ are distance regular; if such an orbital graph is distance regular of diameter two, then it is strongly regular. In the last entry of Table $3$ where $S = \,^2E_6(2)$ and $S\cap H = F_4(2)$, the desired coset action was constructed by L.H. Soicher. Using these methods, we identify from Table $3$, the orbital graphs which are strongly regular and, therefore, are recorded in Table $1$.

\section{Proof of Theorem 1.2}
By Theorem $2.2$, the groups $G\in \mathcal{G}$ are either almost simple and belong to Tables $2$, or $3$, or are in conclusions (i), (ii), or (iii) of Theorem $2.2$. For each such $G\in \mathcal{G}$, we inspect its orbital graphs and check for strong regularity. Since $G$ has rank four, if $\Gamma$ is one of the non-diagonal orbital graphs associated to $G$, then its complement is a union of two orbital graphs; hence we need only check for strong regularity in orbital graphs. As each of the groups in Table $3$ can be dealt with computationally as described in Section $2.3$, we need only consider the cases where $G$ is an almost simple group belonging to Table $2$, or is not almost simple, and belongs to one of conclusions (i), (ii), or (iii) of Theorem $2.2$.

\subsection{The non-almost simple automorphism group case}

Suppose that $G\in \mathcal{G}$, and $G$ is not almost simple. By Theorem $2.2$, one of the following holds:
\begin{enumerate}
    \item The group $G$ is of simple diagonal type, and $S=\soc{G}\cong A_5\times A_5$.
    \item The group $G\cong \PSL_2(8)^2.6$ is such that $T^2\le G \le K\wr S_2$ is in its product action of degree $28^2$, where $T\cong \PSL_2(8)$, and $K\cong \PSL_2(8).3$.
    \item The group $G$ with socle $T$, is of product type on $\Omega=\Delta^3$, and $T^3\le G\le K\wr S_3$, where $K$ is an almost simple group, acting $2$-transitively on $\Delta$.
\end{enumerate}
In the first two cases, we use the GAP computations described in Section $2.3$ to see that the only strongly regular graph we obtain is in case $2$, and has parameters $(784,243,82,72)$. In case $3$, since $K$ is $2$-transitive on $\Delta$, the orbitals of $K\wr S_3$ are the same as those of $S_d\wr S_3$, where $d := |\Delta|$, and these are the Generalised Hamming Graphs, $\Gamma_1$, $\Gamma_2$, and $\Gamma_3$, where $\Gamma_i$ is the graph on $\Delta^3$, and vertices are joined by an edge if and only if they disagree in $i$ coordinates. Let $M= (m_{ij})$ be the $3\times 3$ matrix with $m_{ij} = p^i_{jj}$; by basic counting, we have
\[
    M = \begin{pmatrix}
        * & 2(d-1)(d-2) & (d-2)(d-1)^2 \\
        2 & * & (d-1)(d-2)^2 \\
        0 & 6(d-2) & *
    \end{pmatrix}.
\]
Thus, we may check for strong regularity by checking equality in the columns of $M$. The graph of $\Delta_2$ is strongly regular if and only if $d=4$, however, $K$ must be almost simple, so this is impossible.

\subsection{The graphs whose automorphism group belongs to Table 2}

By Theorem $2.2$, one can divide the actions in Table $2$ into the following three categories: alternating socle; parabolic actions of groups of Lie type; nonsingular subspace actions of classical groups. We treat these one at a time.

\subsubsection{Alternating socle}
\begin{lemma}
    Suppose that $G$ is in Table $2$ and has socle $A_n$. Then the orbital graphs of $G$ are generalised Johnson Graphs, and there are two strongly regular graphs when $n\in\{7,10\}$ with parameters $(35,18,9,9)$, and $(120,63,30,36)$ respectively.
\end{lemma}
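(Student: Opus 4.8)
The plan is first to make the action explicit. Since the point stabiliser meets the socle in $A_n\cap(S_3\times S_{n-3})$, the group $G$ acts on the set $\Omega_3$ of $3$-element subsets of $\{1,\dots,n\}$ in the natural way, the stabiliser of a $3$-set $A$ being the even part of $S_3\times S_{n-3}$. The orbits of this stabiliser on $\Omega_3$ are governed by the intersection size $|A\cap B|\in\{0,1,2,3\}$, and for $n\ge 7$ the even subgroup still fuses each intersection class into a single orbit, since the setwise stabiliser of any pair $A,B$ contains an odd permutation (coming from a block of size at least two among $A\cap B$, $A\setminus B$, $B\setminus A$ and the complement). Hence $G$ has exactly the four orbitals $\Delta_i=\{(A,B):|A\cap B|=i\}$, which both reconfirms that $G$ has rank four and shows that each non-diagonal orbital graph is a generalised Johnson graph $J(n,3,i)$ with adjacency $|A\cap B|=i$, for $i\in\{0,1,2\}$.

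Next I would reduce strong regularity to a single numerical condition. Because $G$ is transitive on each orbital, the number of common neighbours of a pair $(A,B)$ in the graph $(\Omega_3,\Delta_i)$ depends only on the orbital $\Delta_h$ containing $(A,B)$ and equals the intersection number $p_{ii}^h$. The count for adjacent pairs ($h=i$) is therefore automatically constant, giving $\lambda=p_{ii}^i$, and the graph is strongly regular precisely when the counts for the two non-adjacent, non-diagonal orbitals agree, i.e. $p_{ii}^{h_1}=p_{ii}^{h_2}$ for the two values $h_1,h_2\in\{0,1,2,3\}\setminus\{i,3\}$. Here the vertex count is $v=\binom{n}{3}$ and the valency is $k_i=\binom{3}{i}\binom{n-3}{3-i}$.

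The core of the argument is then the combinatorial evaluation of the $p_{ii}^h$, carried out by fixing representatives $A,B$ with $|A\cap B|=h$ and counting the $3$-sets $C$ with $|A\cap C|=|B\cap C|=i$ according to how $C$ distributes over the blocks $A\cap B$, $A\setminus B$, $B\setminus A$, and the complement. For $i=2$ one gets $p_{22}^0=0$ but $p_{22}^1=4$, and for $i=0$ one gets $p_{00}^1=\binom{n-5}{3}\neq\binom{n-4}{3}=p_{00}^2$, so neither choice yields a strongly regular graph for any $n\ge 7$ (consistent with $J(n,3)$ and its associated Kneser graph having diameter three in this range). For $i=1$ the counts are $p_{11}^0=9(n-6)$ and $p_{11}^2=(n-4)^2$, and the strong regularity condition $p_{11}^0=p_{11}^2$ becomes the quadratic $n^2-17n+70=0$, that is $(n-7)(n-10)=0$.

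Finally, substituting $n\in\{7,10\}$ into $v$, $k_1$, $\lambda=p_{11}^1=4(n-5)+\binom{n-5}{2}$, and $\mu=p_{11}^0$ yields the parameter sets $(35,18,9,9)$ and $(120,63,30,36)$, as claimed. I expect the main subtlety to lie not in any individual calculation but in the bookkeeping of the intersection numbers across all three choices of $i$, together with the easy but necessary verification that passing from $S_n$ to $A_n$ splits none of the intersection classes, which is exactly what guarantees that the orbital graphs are precisely the graphs $J(n,3,i)$.
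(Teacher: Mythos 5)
Your proof is correct, but it takes a genuinely different route from the paper. The paper's proof is two lines: it identifies the orbitals as the intersection classes $\Delta_i$ (asserted without the fusion argument) and then simply cites the Cannon--Bamberg--Praeger classification of strongly regular generalised Johnson graphs to conclude that only $i=1$, $n\in\{7,10\}$ survive. You instead prove the classification from scratch for $k=3$: your intersection-number computations ($p_{22}^0=0\ne 4=p_{22}^1$; $p_{00}^1=\binom{n-5}{3}\ne\binom{n-4}{3}=p_{00}^2$; and $p_{11}^0=9(n-6)$, $p_{11}^2=(n-4)^2$, whose equality forces $(n-7)(n-10)=0$) are all correct, as are the resulting parameter sets. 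What the paper's approach buys is brevity and the backing of a result covering all $J(n,k,i)$, not just $k=3$; what your approach buys is a self-contained, elementary argument that also produces $\lambda$ and $\mu$ explicitly, and — a point the paper glosses over — an actual verification that passing from $S_n$ to $A_n$ does not split the intersection classes (your observation that one of the four blocks $A\cap B$, $A\setminus B$, $B\setminus A$, $\{1,\dots,n\}\setminus(A\cup B)$ always has size at least two, so the pair stabiliser contains a transposition, is exactly the right argument and is what justifies the rank-four claim in the first place). The only cosmetic caveat is that your appeal to ``strong regularity $\iff$ equality of $p_{ii}^h$ over the non-diagonal $h\ne i$'' should note that it relies on arc-transitivity of the orbital graphs, which is the same convention the paper uses in its Lemma 3.2.
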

\begin{proof}
    Since the socle is $A_n$ with $n\ge 7$, the group $G$ is either $A_n$ or $S_n$, and hence the orbitals are described by
    \[
        \Delta_i = \{(x,y)\in \Omega_3\times \Omega_3 : |x\cap y|=i\}
    \]
    The graphs of these correspond to the Generalised Johnson Graphs, and by \cite{praeger}, the only strongly regular ones occur when $i=1$, $n\in \{7,10\}$, and have parameters 
    $(35,18,9,9)$, and $(120,63,30,36)$.
\end{proof}

\subsubsection{Parabolic actions}

\begin{lemma}
    Suppose that $G\in \mathcal{G}$ is an almost simple group in Table $2$, with socle a group of Lie type not isomorphic to $\PSL_3(q)$, and with point stabiliser a parabolic subgroup. Then the strongly regular orbital graphs $\Gamma$, for $G$ are as follows, where $S=\soc{G}$:
    \begin{enumerate}[(i)]
        \item $S\cong G_2(q)$ with $q\ne 3^a$, $H\cap S = P_2$, and $\Gamma$ is the distance three graph of $G_{2,2}$.
        \item $S\cong G_2(q)$ with $H\cap S = P_1$, and $\Gamma$ is the complement of the $\Omega_7(q)$ polar graph.
        \item $S\cong \Pommega_7(q)$, with $H\cap S =P_3$, and $\Gamma$ is the complement of the $\mathrm{O}_8^+(q)$ polar graph.
        \item $S \cong \PSp_6(q)$ with $q$ odd, $H\cap S = P_3$, and $\Gamma$ is the distance $3$ graph of the dual polar graph for $\PSp_6(q)$.
    \end{enumerate}
\end{lemma}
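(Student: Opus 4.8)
The plan is to reduce strong regularity to a single condition on the intersection numbers of Lemma~\ref{lem:example}, and then to verify that condition family by family using the known geometry of each parabolic action. Since $G$ is primitive of rank four, Higman's theorem ensures every non-diagonal orbital graph is connected, and in each case at hand the three non-diagonal orbitals $\Delta_1,\Delta_2,\Delta_3$ are self-paired, because the defining relations---the dimension of the intersection of two subspaces, or the distance in the associated dual polar or collinearity graph---are symmetric. Hence for each $i\in\{1,2,3\}$ the orbital graph $\Gamma_i=(\Omega,\Delta_i)$ is a connected regular graph, and it is strongly regular precisely when the number of common $\Gamma_i$-neighbours of two non-adjacent vertices does not depend on which of the two remaining relations joins them. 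In the notation of Lemma~\ref{lem:example} this is exactly the requirement
\[
    p_{ii}^{\,j}=p_{ii}^{\,k},\qquad \{j,k\}=\{1,2,3\}\setminus\{i\},
\]
the common value being $\mu$ and $\lambda=p_{ii}^{\,i}$. The whole lemma therefore reduces to computing the numbers $p_{ii}^{\,h}$ for each family and testing this one equality for each of the three values of $i$.

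The next step is to identify each action with a known graph whose intersection numbers are available. In every case $\Delta_1$ determines a distance-regular graph of diameter three recorded in \cite{drg}: the Grassmann graph $J_q(n,3)$ for $\PSL_n(q)$ on $P_3$; the dual polar graphs for $\PSp_6(q)$, $\Pommega_7(q)$ and $\U_n(q)$ on their maximal parabolics; the half-spin graphs for $\Pommega_{2m}^+(q)$ on $P_m$; the graph $G_{2,2}$ and its $P_1$ analogue for $G_2(q)$; and the triality and exceptional graphs for $^3D_4(q)$ and $E_7(q)$. For each I would take the intersection array $\{b_0,b_1,b_2;c_1,c_2,c_3\}$, use the distance-transitive identification $p_{1j}^{\,h}\in\{a_i,b_i,c_i\}$ together with the relations of Lemma~\ref{lem:example} to recover the full intersection matrices, and read off the $p_{ii}^{\,h}$. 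As the diameter is three, $\Gamma_1$ is never strongly regular, so only $\Gamma_2$ and $\Gamma_3$ survive as candidates.

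The core of the proof is then the case analysis of $p_{ii}^{\,j}=p_{ii}^{\,k}$. For $\PSp_6(q)$ with $q$ odd, for $\Pommega_7(q)$, and for $G_2(q)$ on $P_1$, the rank-three polar and dual polar geometries carry the triality coincidence relating types $D_4$, $B_3$ and $C_3$; this forces two of the three eigenvalues of $A_3$ on the non-trivial eigenspaces to agree, equivalently $p_{33}^{\,1}=p_{33}^{\,2}$, so that $\Gamma_3$---the distance-three graph, whose complement is the polar graph---is strongly regular with parameters $((q^3+1)(q^2+1)(q+1),q^6,q^2(q-1)(q^3-1),q^5(q-1))$. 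The same computation on the array of $G_{2,2}$ gives $p_{33}^{\,1}=p_{33}^{\,2}$ for $G_2(q)$ on $P_2$ with $q\neq 3^a$, yielding the strongly regular distance-three graph with parameters $(\tfrac{q^6-1}{q-1},q^5,q^4(q-1),q^4(q-1))$. For the remaining families---$\PSL_n(q)$ and $\U_n(q)$ on $P_3$, $\Pommega_{2m}^+(q)$ on $P_m$, and $^3D_4(q)$ and $E_7(q)$---I would show the equality fails for every $i$, so none of their orbital graphs is strongly regular.

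The main obstacle lies in two places. The first is assembling correct intersection data for the exceptional geometries $G_{2,2}$, $^3D_4(q)$ and $E_7(q)$, where the formulas are less uniform than for the classical subspace graphs, and in dealing with the characteristic-three behaviour of $G_2$: when $q=3^a$ the special isogeny interchanges the roles of $P_1$ and $P_2$, so the identification of $\Delta_1$ with $G_{2,2}$ fails and this regime must be excluded, as the hypothesis $q\neq 3^a$ records. The second, more delicate, point is establishing non-strong-regularity uniformly in the negative families: since the $p_{ii}^{\,h}$ are polynomials in $q$, the equality $p_{ii}^{\,j}=p_{ii}^{\,k}$ must be ruled out either identically in $q$ or at every admissible prime power, with particular attention to small $q$ where a sporadic coincidence could otherwise slip through.
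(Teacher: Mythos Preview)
Your approach is essentially that of the paper: in each family one orbital graph $\Gamma_1$ is distance-transitive with known intersection array, from which all $p_{ij}^h$ can be recovered (the paper uses the explicit recursion of \cite[Lemma~4.1.7]{drg} in tandem with Lemma~\ref{lem:example}, but this amounts to the same thing), and strong regularity of $\Gamma_i$ reduces to checking $p_{ii}^j=p_{ii}^k$ for $\{j,k\}=\{1,2,3\}\setminus\{i\}$. The paper then carries out the computation rather than appealing to a triality or eigenvalue argument, obtaining for instance $p_{33}^1=q^4(q-1)=p_{33}^2$ directly for $G_2(q)$, and handling the Grassmann case by writing $p_{22}^1-p_{22}^3$ as an explicit quadratic in $X=q^n$ shown nonzero by calculus.

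Two small corrections are needed. First, in the positive families you must also verify that $\Gamma_2$ is \emph{not} strongly regular, since the lemma asserts these are the \emph{only} strongly regular orbital graphs; the paper does this by computing, e.g., $p_{22}^1=q^2(q-1)\neq(q+1)(q^2-1)=p_{22}^3$ for $G_2(q)$. Second, your account of the $q=3^a$ exclusion is not quite right: the action of $G_2(3^a)$ on cosets of $P_2$ is still rank four with a distance-transitive orbital graph, and its distance-three graph is still strongly regular. The exclusion is not because anything fails, but because the graph automorphism of $G_2(3^a)$ identifies the $P_1$ and $P_2$ actions, so the resulting graph is already isomorphic to the complement of the $\Omega_7(q)$ polar graph and case~(i) would duplicate case~(ii).
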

\begin{proof}
    Let $G$ be almost simple with socle $S$, and $H\cap S$ belonging to Table $2$ where $H\cap S$ is a parabolic subgroup. If $S$ is classical and not $\PSL_n(q)$ or $\Pommega_{2m}^+(q)$, then one of the orbital graphs of $G$ is a dual polar graph which is distance transitive and has intersection array as given in \cite[Lemma 9.4.1]{drg}; if $S\cong \PSL_n(q)$, then one of the orbital graphs is the Grassmann graph, which is distance transitive, with parameters given below; if $S$ is an exceptional group of Lie type, then one of the orbital graphs is distance transitive with intersection array given as in \cite[Table 10.8]{drg}; and if $S\cong \Pommega_{2m}^+(q)$, then one of the orbital graphs is a halved dual polar graph with intersection array as in \cite[Theorem 9.4.8]{drg}. In each case, one of the orbital graphs, say $\Gamma_1$, is distance transitive with intersection array $\{b_0,b_1,b_2;c_1,c_2,c_3\} = \{p_{11}^0,p_{21}^1,p_{31}^2;p_{01}^1,p_{11}^2\,p_{21}^3\}$. As in \cite[Lemma 4.1.7]{drg}, we have
    \begin{equation}
        p_{i+1,j}^k = \frac{1}{c_{i+1}}(p_{i,j-1}^kb_{j-1} + p_{ij}^k(a_j-a_i) + p_{i,j+1}^kc_{j+1}-p_{i-1,j}^kb_{i-1})
    \end{equation}
    Call the two remaining non-trivial orbital graphs $\Gamma_2$ and $\Gamma_3$. For $i\in \{2,3\}$, the graph $\Gamma_i$ is strongly regular if and only if $p_{ii}^r = p_{ii}^s$ for $1\le r,s\le 3$. Given the intersection array, by using $(1)$, and Lemma $2.1$, one may compute all the other intersection numbers, and check whether these equalities hold.
    
    Suppose that $\soc{G} = G_2(q)$ and $H\cap S = P_1$ or $P_2$. In either case, the action has a distance transitive orbital graph, say $\Gamma_1$ with intersection array $\{q(q+1),q^2,q^2;1,1,q+1\}$. Using the recursion $(1)$, we see that $p_{33}^1 = q^4(q-1) = p_{33}^2$, and hence given any pair of non-adjacent vertices in $\Gamma_3$, there is a constant number of common neighbours $\mu$ so $\Gamma_3$ is strongly regular with parameters $(\frac{q^6-1}{q-1},q^5,q^4(q-1),q^4(q-1))$. Since the action of $G_{2}(q)$ on $G_2(q)/P_1$ is contained in the action of $\Omega_7(q)$ on $\Omega_7(q)/P_1$, in the case where $H\cap S = P_1$, we see that this strongly regular graph is the complement of the $\Omega_7(q)$ polar graph. In the case where $H\cap S = P_2$, this strongly regular graph is the distance three graph of $G_{2,2}$, and when $q = 3^a$ this graph is isomorphic to the complement of the $\Omega_7(q)$ polar graph. Further, we compute $p_{22}^1=q^2(q-1)$, and $p_{22}^3 = (q+1)(q^2-1)$, so $\Gamma_2$ is not strongly regular. The analysis for the other exceptional groups of Lie type is similar. 
    
    Suppose now that the socle of $G$ is one of $\PSp_6(q)$, $\U_n(r)$ with $n\in\{6,7\}$ and $r^2=q$, or $\Pommega_7(q)$ so that $\Gamma_1$ is a dual polar graph. We compute
    \begin{align*}
        p_{22}^1 = q^{e+1}(q+1)(q^e-1), \quad &p_{22}^3 = \frac{q^2+q+1}{q+1}\left((q^2+q+1)(q^e-1) + (q+1)(q^{e+1}-1) -  q^{e+2} + 1\right) \\
        p_{33}^1 = q^{2e+3}(q^e-1), \quad &p_{33}^2 = \frac{q^{e+2}}{q+1}\left(q^e(q^2-1) + (q^e-1)(q^{e+2}+q^{e+1}-q^2-q)\right),
    \end{align*}
    where $e=1$ if $S = \PSp_6(q)$ or $S=\Pommega_7(q)$, and $e = \frac{1}{2},\frac{3}{2}$ if $S = \U_6(r),\U_7(r)$ respectively, where $q=r^2$. If $e=1$, then $p_{33}^1=p_{33}^2$, and so $(\PSp_6(q),P_3)$, and $(\Pommega_7(q),P_3)$ both have a strongly regular orbital graph, the distance $3$ graph of the respective dual polar graphs; when $q$ is even, these graphs are isomorphic. By observing that the action of $\Pommega_7(q)$ on totally singular $3$-spaces is contained in the action of $\Pommega_8^+(q)$ on singular $1$-spaces, we see that this graph is the complement of the $\mathrm{O}_8^+(q)$ polar graph. If $q$ is odd, then $\PSp_6(q)\not\cong \Pommega_7(q)$, and the distance $3$ graph of the dual polar graph for $\PSp_6(q)$ is not isomorphic to the complement of the $\mathrm{O}_8^+(q)$ polar graph.
    
    Next let $S=\PSL_n(q)$ be as in Table $2$, so that $\Gamma_1$ is the distance transitive Grassmann Graph with parameters
    \[
        b_i = q^{2i+1}\qbinom{3-i}{1}\qbinom{n-3-i}{1}, \qquad c_i = \qbinom{i}{1}^2
    \]
    where $\qbinom{*}{*}$ is the Gaussian $q$-binomial coefficient and $0\le i\le 3$; we show that $p_{22}^1\ne p_{22}^3$, so that the graph of $\Delta_2$ is not strongly regular. By writing $X := q^n$, one has that $b_i = \alpha_i X + \beta_i$, where $\alpha_i$, $\beta_i \in \R(q)$ are as follows:
    \begin{align*}
        \alpha_0 = \frac{q^{-2}+q^{-1}+1}{q-1}, &\qquad \beta_0 = -\frac{q+q^2+q^3}{q-1} \\
        \alpha_1 = \frac{1+q^{-1}}{q-1}, &\qquad \beta_1 = -\frac{q^4+q^3}{q-1} \\
        \alpha_2 = \frac{1}{q-1}, &\qquad \beta_2 = -\frac{q^5}{q-1}.
    \end{align*}
    Define $f_2=p_{22}^1 - p_{22}^3$. By $(1)$ and Lemma $2.1$ we compute
    \[
        f_2 = A(q)X^2 + B(q)X + C(q)
    \]
    where
    \begin{align*}
        A(q) &= \frac{1}{q^3(q-1)^2} \\
        B(q) &= \frac{-2q^6-7q^5-8q^4-q^3+5q^2+4q+1}{q^2(q-1)^2(q+1)^2} \\
        C(q) &= \frac{q^9+5q^8+9q^7+5q^6-6q^5-11q^4-5q^3+2q^2+3q+1}{(q-1)^2(q+1)^2}
    \end{align*}
    from which it can easily be seen by calculus that $f_2\ne0$; so the graph of $\Gamma_2$ is not strongly regular. Similarly, the graph of $\Gamma_3$ is not strongly regular. 
    
Finally suppose that $S\cong \Pommega_{2m}^+(q)$ with $m\in\{6,7\}$ is in its action on totally singular $m$-spaces. The distance transitive orbital graph $\Gamma_1$, is a halved dual polar graph. By similar methods to those involved in the Grassmann graph case, we see that neither the distance $2$, nor the distance $3$ graph of the above halved dual polar graph is strongly regular.
\end{proof}

 \begin{prop}
    Let $G\in \mathcal{G}$ be the entry in Table $2$ with $S=\PSL_3(q)$, $H\cap S = P_{1,2}$, and suborbits $\Delta_i$, for $0\le i\le 3$. Define $M=(m_{ij})$, to be the matrix with $m_{ij} = p_{jj}^i$. We have that
    \[
        M =\begin{pmatrix}
            * & q(q-1) & q^2(q-1) \\
            1 & * & q(q-1)^2\\
            0 & 4(q-1) & *
        \end{pmatrix}
    \]
    and the suborbit lengths are $1$, $2q$, $2q^2$, and $q^3$. Consequently, the only strongly regular orbital graph is the graph of $\Delta_2$, with $q=4$ and parameters $(105,32,4,12)$.
\end{prop}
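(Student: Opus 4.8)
The plan is to realise the action of $G$ on $G/P_{1,2}$ as the action on the chambers, i.e. the incident point--line flags $(P,\ell)$, of the projective plane $\mathrm{PG}(2,q)$, and to exploit the rank two building structure of type $A_2$. Since $P_{1,2}$ is a Borel subgroup, the $\PSL_3(q)$-orbitals on flags are indexed by the Weyl group $W=S_3=\langle s_1,s_2\rangle$ via the Bruhat decomposition, and the $G$-orbitals are the orbits of the graph automorphism (duality) on $W$. As duality swaps $s_1\leftrightarrow s_2$, these orbits are $\{1\}$, $\{s_1,s_2\}$, $\{s_1s_2,s_2s_1\}$ and $\{w_0\}$; crucially these coincide with the length-graded pieces $\{w:\ell(w)=i\}$ for $i=0,1,2,3$, which is exactly why the rank is four and where the hypothesis that $G$ contains a graph automorphism is used. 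Since the double coset $BwB$ has size $q^{\ell(w)}|B|$, summing $q^{\ell(w)}$ over each orbit gives the suborbit lengths $1$, $2q$, $2q^2$, $q^3$, as claimed. I would then record the geometry of each (self-paired) relation: $\Delta_1$ joins flags sharing a point or a line; $\Delta_2$ joins $(P,\ell)$ to $(P_0,\ell_0)$ when $P\in\ell_0$ with $P\ne P_0$ (whence $P_0\notin\ell$) or dually when $P_0\in\ell$ with $\ell\ne\ell_0$ (whence $P\notin\ell_0$); and $\Delta_3$ joins opposite flags, with $P\notin\ell_0$ and $P_0\notin\ell$.

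To compute the matrix $M=(m_{ij})$ with $m_{ij}=p_{jj}^i$, the cleanest route, mirroring the preceding lemma, is to observe that $\Gamma_1=(\Omega,\Delta_1)$ is the chamber graph of the building, so its graph distance equals gallery length and the sphere of radius $i$ is exactly $\Delta_i$. Hence $\Gamma_1$ is distance transitive of diameter $3$ with intersection array $\{2q,q,q;1,1,2\}$, and feeding this into the recursion $(1)$ together with Lemma $2.1$ yields every $p_{ij}^h$. Equivalently, and more transparently, each off-diagonal entry can be obtained by a direct incidence count organised around the meet point $\ell_0\cap\ell_1$ and the join line through $P_0,P_1$. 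For instance two opposite flags have no common $\Delta_1$-neighbour, so $p_{11}^3=0$, while two flags in relation $\Delta_2$ share exactly one, giving $p_{11}^2=1$. The entry $m_{32}=p_{22}^3=4(q-1)$ is the decisive one: a common $\Delta_2$-neighbour of two opposite flags splits into four sub-cases according to which half of $\Delta_2$ (the collinear-via-$\ell_0$ type or the concurrent-via-$P_0$ type) it realises with respect to each of the two flags, and each sub-case contributes exactly $q-1$ flags.

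Finally I would read off strong regularity from the columns of $M$: the graph $\Gamma_j$ is strongly regular precisely when $p_{jj}^r=p_{jj}^s$ for the two indices $r,s\notin\{0,j\}$. For $\Gamma_1$ this fails as $p_{11}^2=1\neq 0=p_{11}^3$ (equivalently, the chamber graph has diameter $3$); for $\Gamma_3$ it fails as $p_{33}^1=q^2(q-1)\neq q(q-1)^2=p_{33}^2$ for every prime power $q$; and for $\Gamma_2$ it requires $q(q-1)=p_{22}^1=p_{22}^3=4(q-1)$, which holds exactly when $q=4$. In that case the degree is $v=1+2q+2q^2+q^3=(q+1)(q^2+q+1)$, equal to $105$, the valency is $k=|\Delta_2|=2q^2=32$, and $\mu=p_{22}^1=q(q-1)=12$; the remaining parameter $\lambda=p_{22}^2$ follows either from the diagonal computation or from the identity $k(k-\lambda-1)=(v-k-1)\mu$, giving $\lambda=4$ and parameters $(105,32,4,12)$.

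The hard part will be the bookkeeping in the intersection-number computation, and in particular pinning down $p_{22}^3=4(q-1)$: one must correctly separate the two halves of the self-paired relation $\Delta_2$, verify that each of the four combinations is non-empty and contributes a clean $q-1$, and confirm that no flag is double-counted. The conceptual step that makes everything else routine is establishing once and for all that $\Gamma_1$ is genuinely distance transitive, so that the recursion $(1)$ is applicable and the whole matrix $M$ is forced by the intersection array $\{2q,q,q;1,1,2\}$.
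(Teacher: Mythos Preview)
Your proposal is correct, and the description of the orbitals, the suborbit lengths, and the final strong-regularity analysis all match the paper. However, the route you take to the intersection numbers is genuinely different from the paper's. The paper works entirely by elementary incidence counting on flags $(U,W)$: it fixes a pair in $\Delta_1$, counts by hand the flags $(X,Y)$ lying in $\Delta_2$ with respect to both, obtains $p_{22}^1=q(q-1)$, and then simply asserts that ``the others are similar''. By contrast, you import the building-theoretic viewpoint---Bruhat cells indexed by $S_3$, fused in pairs by the graph automorphism---and then observe that $\Gamma_1$ is the chamber graph, hence distance transitive with intersection array $\{2q,q,q;1,1,2\}$, so that the recursion~$(1)$ already used in Lemma~3.2 produces the entire matrix $M$ mechanically. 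What your approach buys is uniformity: this case is handled by exactly the same machinery as the parabolic actions in Lemma~3.2, and entries such as $p_{22}^3=4(q-1)$ and $p_{33}^2=q(q-1)^2$ drop out of the recursion without any case-splitting. What the paper's approach buys is self-containment: no appeal to buildings or to distance transitivity is needed, only projective-plane incidence, at the cost of leaving most of the computation to the reader. Your identification of the key verification---that the $G$-orbitals really coincide with the gallery-distance classes, which is precisely where the hypothesis that $G$ contains a graph automorphism enters---is exactly the point that legitimises applying~$(1)$ here.
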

\begin{proof}
    The $G$-action is on pairs of subspaces $(U,W)$, with $U\subseteq W$, and $\dim(U)=1$, $\dim(W)=2$. Define $\alpha = (U,W)$ so that the orbits of $G_\alpha$ on the pairs $(U',W')$ described above are given by:
    \begin{align*}
        \Delta_0&=\{(U,W)\} \\
        \Delta_1&=\{(U',W') : U'=U \text{ or } W'=W\}\setminus \Delta_0 \\
        \Delta_2 &= \{(U',W') : U\not\subseteq W', U'\subseteq W \text{ or } U\subseteq W', U'\not\subseteq W\} \\
        \Delta_3 &= \{(U',W') : U\not\subseteq W', U'\not\subseteq W\}.
    \end{align*}
    We compute the intersection number $p_{22}^1$; the others are similar.
    Consider two pairs $(A,B)$ and $(A,C)$, both lying in $\Delta_1$. We have that $A\subseteq B$ and $A\subseteq C$, so that $A=B\cap C$. To compute $p_{22}^1$, we count the pairs $(X,Y)$ satisfying
\begin{itemize}
    \item $X\subseteq B$, $A\not\subseteq Y$ and $X\subseteq C$, $A\not\subseteq Y$, or; 
    \item $X\not\subseteq B$, $A\subseteq Y$ and $X\not\subseteq C$, $A\subseteq Y$.
\end{itemize}
Consider the case where $A\subseteq Y$. We need $X\not\subseteq B$ and $X\not\subseteq C$. Since $Y$ is a two-space with $Y\supseteq X$, one sees $Y=\braket{X,A}$. The number of required $X$ is the number of one-spaces in $V$, which are not in $B$, or in $C$; the number of such choices for $X$ is $\qbinom{3}{1}-2\qbinom{2}{1}+1=q(q-1)$. In the second case $A\not\subseteq Y$, so we need $X\subseteq B$ and $X\subseteq C$, which implies $X=B\cap C=A$, so there are none in this case, and $p_{22}^1 = q(q-1)$.
\end{proof}

\subsubsection{Nonsingular subspace actions}

\begin{prop}
    Let $G\in \mathcal{G}$ be the almost simple group in Table $2$ with $S=\soc{G}=\U_n(q)$, $q\in \{3,4\}$, and point stabiliser $N_1$. Then the graph $NU_n(q)$ in Example $1.1$ (i) is the only strongly regular orbital graph associated to $G$.
\end{prop}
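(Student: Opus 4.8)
The plan is to realise the three non-diagonal orbitals explicitly in the Hermitian geometry and to read off the intersection numbers $p_{ii}^h$ by direct counting. Fix a nondegenerate Hermitian form $h$ on the $n$-dimensional $\F_{q^2}$-space $V$, and for nonsingular vectors $u,v$ set $\theta(u,v)=\frac{h(u,v)h(v,u)}{h(u,u)h(v,v)}\in\F_q$; this quantity is invariant under scaling $u$ and $v$, and by Witt's theorem it is a complete invariant for the action of the unitary group on ordered pairs of nonsingular $1$-spaces. Thus the $N_1$-orbits on nonsingular points are indexed by the value $\theta\in\F_q$, and since the rank is four this forces $q=3$, or $q=4$ with the graph automorphism of Table $2$ fusing the two orbits whose $\theta$-values form the Galois orbit $\{\omega,\omega^2\}$ of $\F_4\setminus\F_2$. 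The span $\langle u,v\rangle$ is a degenerate plane precisely when $\theta=1$, in which case the joining line is a tangent; hence the orbital $\Delta_1$ with $\theta=1$ is exactly $NU_n(q)$. I write $\Delta_2$ for the perpendicular orbital $\theta=0$, and $\Delta_3$ for the remaining orbital. Since the scheme has rank four, $\Gamma_i=(\Omega,\Delta_i)$ is strongly regular if and only if $p_{ii}^h$ is independent of $h$ as $h$ ranges over the two indices in $\{1,2,3\}\setminus\{i\}$, so everything reduces to computing these three pairs of numbers.

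The workhorse is the count $N_m(t)$ of vectors of norm $t$ in a nondegenerate Hermitian space of dimension $m$. A short additive-character computation, using that the norm map $\F_{q^2}\to\F_q$ is $(q+1)$-to-one away from zero, gives
\[
N_m(t)=q^{2m-1}-(-1)^m q^{m-1}\quad(t\neq 0),\qquad N_m(0)=q^{2m-1}+(-1)^m(q-1)q^{m-1}.
\]
Because every nonsingular $1$-space contains exactly $q+1$ vectors of norm $1$, one has $p_{ii}^h=\frac{1}{q+1}\#\{w:h(w,w)=1,\ \theta(u,w)\in\Theta_i,\ \theta(v,w)\in\Theta_i\}$ for a reference pair $(u,v)\in\Delta_h$ normalised so that $h(u,u)=h(v,v)=1$, where $\Theta_i\subseteq\F_q$ is the set of $\theta$-values defining $\Delta_i$ (a single value, except for the fused orbital $\Delta_3$ when $q=4$). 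When $\langle u,v\rangle$ is nondegenerate I will split $V=\langle u,v\rangle\perp\langle u,v\rangle^\perp$ and write $w=w_1+w_2$; the two $\theta$-constraints depend only on $w_1$, while $h(w,w)=h(w_1,w_1)+h(w_2,w_2)$, so the count collapses to a finite sum over the plane $\langle u,v\rangle$ of the values $N_{n-2}\!\left(1-h(w_1,w_1)\right)$.

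With this machinery the perpendicular graph $\Gamma_2$ is immediate: the common $\Gamma_2$-neighbours of $u,v$ are exactly the nonsingular points of $\langle u,v\rangle^\perp$, and this $(n-2)$-space is nondegenerate when $(u,v)\in\Delta_3$ but degenerate, with a $1$-dimensional radical, when $(u,v)\in\Delta_1$. Counting nonsingular points in each case via $N_{n-2}$ (respectively $N_{n-3}$ on a complement of the radical) yields $p_{22}^1-p_{22}^3=(-1)^n q^{n-3}\neq 0$, so $\Gamma_2$ is not strongly regular. The graph $\Gamma_1=NU_n(q)$ is strongly regular with the stated parameters by Example $1.1$(i) and \cite{srg}, equivalently because the reduction above forces $p_{11}^2=p_{11}^3$.

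The real work is $\Gamma_3$, where no such subspace description is available and the two $\theta$-constraints must be imposed simultaneously. Here I would compute $p_{33}^2$ using the orthogonal splitting of the previous paragraph, and $p_{33}^1$ by first enlarging the degenerate plane $\langle u,v\rangle$ to a nondegenerate $3$-space $\langle u,v,y\rangle$ (adjoining a hyperbolic partner of the radical) and splitting $V$ off that; in both cases the count reduces to a finite, $n$-independent tally of admissible $w_1$ in a plane or $3$-space, weighted by the two values $N_{n-2}(0)$ and $N_{n-2}(t)$. The main obstacle is precisely this degenerate-plane bookkeeping, together with verifying that the resulting difference $p_{33}^1-p_{33}^2$, a fixed $\F_q$-linear combination of $N_{n-2}(0)$ and $N_{n-2}(t\neq 0)$, is nonzero for every $n\geq 3$ and for both $q\in\{3,4\}$ (with the Galois-fused orbit handled when $q=4$). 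Granting this, $\Gamma_3$ fails strong regularity, and $NU_n(q)$ is the unique strongly regular orbital graph, as claimed.
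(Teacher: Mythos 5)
Your setup coincides with the paper's: your invariant $\theta(u,v)$ is exactly the paper's $N(h(x,y))$ after normalising $h(x,x)=h(y,y)=1$, your identification of the tangent orbital via the degenerate-plane condition $\theta=1$ matches the paper's direct solution of $N(\alpha+\lambda)=-1+N(\lambda)$, and your strong-regularity criterion (that $p_{ii}^h$ be independent of the two off-diagonal indices $h\ne i$) is the one the paper uses. Your treatment of the perpendicular orbital $\Gamma_2$ is correct and in fact cleaner than the paper's coordinate count: the common $\Gamma_2$-neighbours of $u,v$ are precisely the nonsingular points of $\braket{u,v}^\perp$, and your value $p_{22}^1-p_{22}^3=(-1)^nq^{n-3}$ agrees with what the paper's formulae give (its $p_{00}^\omega=\mathcal{N}_{n-2}(1)/(q+1)$ is your $p_{22}^3$). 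Your vector counts $N_m(t)$ are also the correct ones; they agree with the paper's $\mathcal{N}_m(1)$, while the paper's displayed expression for $\mathcal{N}_m(0)$ contains a typo.

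The gap is $\Gamma_3$, and you name it yourself: ``Granting this, $\Gamma_3$ fails strong regularity.'' That grant is precisely the content of the proposition, and it is the computation that occupies the bulk of the paper's proof. The paper carries it out by taking $(x,y)\in\Delta_\omega$ in explicit orthonormal coordinates ($x=v_1$, $y=\lambda v_1+\alpha v_2$), computing $p_{1\omega}^\omega$ as a combination of the counting functions $\mathcal{N}_m(c)$, converting to $p_{\omega\omega}^1$ via Lemma 2.1(iii) ($p_{\omega\omega}^1=p_{1\omega}^\omega\,k_\omega/k_1$), and then checking that $p_{\omega\omega}^1=p_{\omega\omega}^0=(q+1)\mathcal{N}_{n-2}(0)$ has no solution in $n$. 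Your proposed route (split $V$ orthogonally off the plane $\braket{u,v}$, respectively off a nondegenerate $3$-space containing it when $\braket{u,v}$ is degenerate, and tally admissible $w_1$ weighted by $N$-counts) is sound and essentially the same computation in different clothing, but until the tally is performed and the difference shown nonzero for all $n\ge 3$, nothing is proved: ruling out an orbital graph can only be done by exhibiting the unequal intersection numbers. Two details to watch if you complete it: in the degenerate-plane case the weights are $N_{n-3}(\cdot)$, not $N_{n-2}(\cdot)$, since you split off a $3$-space, so $p_{33}^1-p_{33}^2$ mixes $N_{n-3}$ and $N_{n-2}$ values; and for $q=4$ the constraints must run over the fused value set $\{\omega,\omega^2\}$, so that case is a genuinely different count from $q=3$ (the paper, too, only does $q=3$ in full, but it does do one case in full, which is what your write-up lacks).
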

\begin{proof}
Let $V$ be an $n$-dimensional vector space over $\F_{q^2}$ and $h : V\times V \to \F_{q^2}$ a non-degenerate hermitian form on $V$ preserved by $G$; define $f(v)=h(v,v)$. For $\alpha\in \F_{q^2}$, define $N: \F_{q^2}\to \F_q$ as $N(\alpha)=\alpha\bar{\alpha}=\alpha^{q+1}$, and let $K := \{\alpha\in \F_{q^2}^* : N(\alpha)=1\}$.
The $G$-action has non-diagonal orbitals described by
\[
    \Delta_\lambda = \{(\braket{x},\braket{y}) : h(x,x)=h(y,y)=1, N(h(x,y))=N(\lambda)\}
\]
for $\lambda$ a coset representative of $\{0\}\cup\F_{q^2}^*/K$. We claim that two projective points $\braket{x}$, and $\braket{y}$ meet at a tangent if and only if $(\braket{x},\braket{y})\in \Delta_1$. The two points meet at a tangent precisely when the line $L$ containing them meets the variety $H=\{v\in V : f(v)=0\}$ in a single projective point. For $v=x+\alpha y\in L$, with $\alpha\in \F_{q^2}$, we see that $v\in H$ if and only if $N(\alpha+\lambda)=-1+N(\lambda)$; the number of solutions $\alpha\in \F_{q^2}$ to this equation is $1$ if and only if $N(\lambda) = 1$, proving the claim. Hence the graph of $\Delta_1$ is precisely $NU_n(q)$. \vspace{1mm}

Next, we claim that the graph of $\Delta_\lambda$ is not strongly regular if $N(\lambda)\ne 1$. We show this for $q=3$; the case where $q=4$ is similar. \vspace{1mm}

Choose representatives $\lambda$ for the cosets $\{0\}\cup \F_9^*/K$ so that the non-diagonal orbitals are given by $\Delta_0$, $\Delta_1$, $\Delta_\omega$, with $N(\omega) = 2$. Suppose that there is another strongly regular orbital graph $\Gamma_\rho$ corresponding to the $\Delta_\rho$ orbital with $N(\rho)\ne 1$, so $\rho \in \{0,\omega\}$. Let $\sigma\in \{0,\omega\}$ be such that $\rho \ne \sigma$, hence $p_{\rho\rho}^1 = p_{\rho\rho}^\sigma$; we show that this is impossible. We begin by computing the intersection number $p_{\omega\omega}^1$, which by Lemma $2.1$ is equal to $p_{1\omega}^\omega
\frac{k_\omega}{k_1}$, where $k_\lambda$ is the size of the suborbit indexed by $\lambda$; and so we compute $p_{1\omega}^\omega$, that is the number of $\braket{z}\subset V$ with $f(z)=1$ such that $N(h(x,z)) = 1$, and $N(h(z,y))=N(\omega)$. Let $\mathcal{B}=\{v_1,\ldots, v_n\}$ be an orthonormal basis for $V$, and choose a pair of vectors $(x,y)$ with $f(x)=1$, and $f(y)=1$, such that $N(h(x,y))=N(\omega)$, and $(\braket{x},\braket{y})\in \Delta_\omega$. To satisfy this, one may choose $x = v_1$, and $y = \lambda v_1 + \alpha v_2$ to be such that $N(\lambda) = N(\omega)$ and $N(\lambda)+N(\alpha)=1$.  Writing $z = \sum_i\, \alpha_iv_i$, gives $N(\alpha_1)=1$, for which there are $q+1$ choices of $\alpha_1$. It also implies that $N(\alpha_1\bar{\omega} + \alpha_2\bar{\alpha}) = N(h(z,y)) = N(\omega)$, for which there are $q+1$ choices of $\alpha_2$, one of which is $\alpha_2=0$. The fact that $f(z)=1$ implies that
\[
    \bsum_{i=3}^n \, N(\alpha_i) = -N(\alpha_2).
\]
Let $\mathcal{N}_{n-2}(c)$ with $c\in \{0,1\}$ denote the number of solutions to the equation $\sum_{i=3}^n\, N(\alpha_i) = c$. The value of $\mathcal{N}_n(0)$ is well known, \cite[Theorem 26.9]{comb}, to be $\mathcal{N}_n(0) = (q^n+(-1)^{n-1})(q^{n-1}-(-1)^n)$ from which it follows that $\mathcal{N}_n(1) = q^{2n-1}+(-1)^{n-1}q^{n-1}$.  It follows that $p_{1\omega}^\omega = \mathcal{N}_m(0) + q\mathcal{N}_m(1)$. We compute the suborbit lengths, $k_\lambda$, to be
\[
    k_0 = \frac{\mathcal{N}_{n-1}(1)}{q+1}, \quad
    k_1 = \mathcal{N}_{n-1}(0), \quad
    k_\omega = \mathcal{N}_{n-1}(1).
\]
Hence $p_{\omega\omega}^1 = \frac{\mathcal{N}_{n-1}(1)}{\mathcal{N}_{n-1}(0)}\left(\mathcal{N}_m(0) + q\mathcal{N}_m(1)\right)$ and similarly, $p_{\omega\omega}^0 = (q+1)\mathcal{N}_{n-2}(0)$. The equation $p_{\omega\omega}^1 = p_{\omega\omega}^0$ has no solutions in $n$, which is a contradiction, so the graph of $\Delta_\omega$ is not strongly regular. In a similar fashion we obtain $p_{00}^\omega = \frac{\mathcal{N}_{n-2}(1)}{q+1}$, $p_{00}^1 = \frac{\mathcal{N}_{n-1}(1)\mathcal{N}_{n-2}(0)}{(q+1)\mathcal{N}_{n-1}(0)}$, and in the same way as above, the graph of $\Delta_0$ is not strongly regular.
\end{proof}

\begin{prop}
    Let $G$ be an almost simple group in Table $2$ with $S=\soc{G}=\Pommega_n(5)$, $n$ odd, and point stabiliser $N_1^\epsilon$ with $\epsilon=\pm$. Then the graph $NO_n^\epsilon(5)$ in Example $1.1$ (ii) is the only strongly regular orbital graph associated to $G$.
\end{prop}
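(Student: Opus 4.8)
The plan is to mirror the argument of Proposition 3.4, replacing the hermitian solution counts by the corresponding counts for quadratic forms over $\F_5$. Let $b$ be the symmetric bilinear form associated with $Q$, so that $b(v,v)=2Q(v)$, and note that each point of $\Omega^\epsilon$ has a representative $x$ with $Q(x)=1$, unique up to the replacement $x\mapsto -x$. For two points $\braket{x},\braket{y}$ of $\Omega^\epsilon$ the scalar $t:=b(x,y)$ is then defined up to sign, so the non-diagonal suborbits of $N_1^\epsilon$ are indexed by $t^2\in\{0,1,4\}$. First I would confirm that these three values genuinely give the three non-diagonal orbitals: writing $y=\tfrac{t}{2}x+y'$ with $y'\in x^\perp$ gives $Q(y')=1-\tfrac14 t^2$, which over $\F_5$ takes the values $1,2,0$ as $t^2=0,1,4$. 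Witt's theorem, applied to the $\mathrm{O}_{2m}^\epsilon(5)$-space $x^\perp$, shows the point stabiliser is transitive on the vectors $y'$ of each fixed norm, so each value of $t^2$ indexes a single non-diagonal orbital, the value $t^2=4$ accounting for both the diagonal ($y'=0$) and one genuine orbital (with $y'$ a nonzero singular vector).

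Next I would pin down which orbital is the tangent graph. The line through $\braket{x}$ and $\braket{y}$ meets the quadric $Q=0$ in the roots of $\alpha^2+t\alpha+1=0$, whose discriminant is $t^2-4$; as $2$ is a non-square modulo $5$, the values $t^2=1,0,4$ give respectively an external line (no intersection), a secant (two points) and a tangent (one point). Hence the orbital with $t^2=4$ is exactly $NO_n^\epsilon(5)$, which is strongly regular by Example 1.1(ii), and it remains only to show that the secant orbital ($t^2=0$) and the external orbital ($t^2=1$) are not strongly regular.

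For each of these two orbitals $\Delta_i$, by the criterion used throughout this section, strong regularity would force the two intersection numbers $p_{ii}^j$, with $j$ ranging over the two non-adjacent relations, to coincide; I would compute both and exhibit an inequality. Following Proposition 3.4, I would use Lemma 2.1(iii) to rewrite $p_{ii}^j$ as a count of vectors $z$ with $Q(z)=1$ whose inner products $b(x,z)$ and $b(z,y)$ lie in the prescribed sign-classes, and resolve this count by writing $z=\alpha x+\beta y+w$ with $w\in\braket{x,y}^\perp$: the inner-product conditions determine $(\alpha,\beta)$ through the invertible Gram matrix $\left(\begin{smallmatrix} 2 & t \\ t & 2\end{smallmatrix}\right)$ (here $t^2\ne 4$), and $Q(z)=1$ then forces $Q(w)$ to equal an explicit constant. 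Writing $\mathcal{N}(c)$ for the number of vectors of norm $c$ in the non-degenerate $(2m-1)$-dimensional space $\braket{x,y}^\perp$, the inner count becomes a value of $\mathcal{N}$; since the dimension is odd, $\mathcal{N}(0)=5^{2m-2}$ and $\mathcal{N}(c)=5^{2m-2}\pm 5^{m-1}$ for $c\ne 0$, the sign being governed by the quadratic character of $c$ times the discriminant of $\braket{x,y}^\perp$. Summing over the admissible sign-classes of $(\alpha,\beta)$ then gives each $p_{ii}^j$ as an explicit function of $m$ and $\epsilon$.

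The hard part will be the bookkeeping. Unlike the hermitian case, the odd-dimensional counts $\mathcal{N}(c)$ carry the quadratic character, so the formulas branch according to whether the relevant target norms are squares or non-squares modulo $5$, and the discriminant of $\braket{x,y}^\perp$ depends on both $t$ and $\epsilon$ through $\det\left(\begin{smallmatrix} 2 & t \\ t & 2\end{smallmatrix}\right)=4-t^2$; one must also track the sign ambiguities $x\mapsto -x$, $y\mapsto -y$, $z\mapsto -z$ carefully to avoid miscounting the contributing $\braket{z}$. Once the intersection numbers are assembled, the conclusion is a finite check: for both the secant and the external orbital the equation $p_{ii}^{j_1}=p_{ii}^{j_2}$ reduces to a polynomial relation in $5^m$ with no admissible solution, exactly as the equation $p_{\omega\omega}^1=p_{\omega\omega}^0$ failed for $n$ in Proposition 3.4.
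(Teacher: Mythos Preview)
Your proposal is correct and follows essentially the same route as the paper: identify the three non-diagonal orbitals via the square class of the inner product, pin down the tangent orbital by the discriminant of $\alpha^2+t\alpha+1$, and then rule out the remaining two orbitals by computing enough intersection numbers through vector counts in the orthogonal complement, exactly as in Proposition~3.4. The paper's own proof is terse (it simply says ``with calculations similar to those in Proposition~3.4''), and your outline is a faithful and more explicit unpacking of what those calculations entail; note only that your bilinear form $b$ is twice the paper's $(-,-)$, so your tangent label $t^2=4$ corresponds to the paper's $N(\lambda)=1$, which is consistent.
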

\begin{proof}
    Let $Q : V\to \F_5$ be the associated quadratic form with bilinear form $(-,-)$, and $N: \F_5\to \F_5$ the quadratic norm given by squaring. The non-diagonal orbitals of the action are given by
    \[
        \Delta_\lambda := \{(\braket{x},\braket{y}) : Q(x)=Q(y)=1, N((x,y)) = \lambda^2\},
    \]
    where $\lambda\in \{0,1,2\}$. Two projective points $\braket{x}$, and $\braket{y}$, meet at a tangent precisely when the line $L$ passing through them meets the variety $\{v \in V : Q(v)=0\}$ in one point. By writing $v = x+\alpha y$, one sees that this occurs whenever $\alpha^2+2\lambda \alpha + 1=0$, where $\lambda = (x,y)$. This has one solution exactly when $N(\lambda)=1$, so the graph of the $\Delta_1$ orbital is the graph $NO_n^\epsilon(5)$. With calculations similar to those in Proposition $3.4$, we see that there are no further strongly regular orbital graphs.
\end{proof}

\begin{prop}
    Suppose that $G$ is an almost simple group in Table $2$ such that $S=\soc{G}=\Pommega_{2m}^\pm(q)$ with $q\in\{4,5\}$, and point stabiliser $N_1$. Then none of the orbital graphs of $G$ are strongly regular.
\end{prop}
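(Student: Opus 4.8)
The plan is to mimic the structure of Propositions 3.4 and 3.5, which handle the unitary and odd-orthogonal nonsingular subspace actions. The group $G$ acts on nonsingular $1$-spaces $\braket{x}$ with $Q(x)=1$ (after scaling), and the non-diagonal orbitals are indexed by the value $N((x,y))$ of the norm of the bilinear form on representatives. In the even-dimensional case $\Pommega_{2m}^\pm(q)$ with $q\in\{4,5\}$, I would first set up the orbital parametrisation explicitly: fix a quadratic form $Q:V\to\F_q$ with bilinear form $(-,-)$, and write the orbitals as $\Delta_\lambda=\{(\braket{x},\braket{y}):Q(x)=Q(y)=1,\ N((x,y))=\lambda\}$ for suitable coset representatives $\lambda$, being careful because the norm map $N$ and the square classes differ between $q=4$ (characteristic $2$, so $Q$ and $(-,-)$ interact differently) and $q=5$. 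Since $G$ has rank four there are exactly three non-diagonal orbitals, so there are three candidate orbital graphs to test for strong regularity.

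**Next I would** compute, for each non-diagonal orbital $\Delta_\lambda$, the intersection numbers $p_{\lambda\lambda}^r$ as $r$ ranges over the other orbital indices, exactly as in Proposition 3.4. The key tool is the counting of vectors $z$ with $Q(z)=1$ satisfying prescribed inner-product-norm conditions with a fixed pair $(x,y)$; this reduces, after choosing an orthogonal-type basis adapted to the $2$-space $\braket{x,y}$, to counting solutions of an equation $\sum_{i} N(\alpha_i)=c$ on the orthogonal complement. I would invoke the known formula for the number of representations of a field element by the restricted quadratic form on an even-dimensional orthogonal space (the analogue of the $\mathcal{N}_n(c)$ count used in Proposition 3.4), which depends on the type $\epsilon=\pm$ of the complement. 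Assembling these counts via Lemma 2.1 (using $p_{ij}^h k_h=p_{ih}^j k_j$ and the suborbit lengths $k_\lambda$) gives closed-form expressions for each $p_{\lambda\lambda}^r$ as functions of $m$ (with $q$ fixed at $4$ or $5$).

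**The strong-regularity test** is then the requirement that, for a fixed orbital $\Delta_\lambda$, the quantity $p_{\lambda\lambda}^r$ is independent of $r$ across the non-diagonal indices $r\ne\lambda$; strong regularity fails for $\Delta_\lambda$ as soon as two of these differ. So for each of the three orbitals I would exhibit a pair of intersection numbers whose difference is a nonzero function of $m$ (for each fixed $q\in\{4,5\}$), and conclude by showing this difference never vanishes for $m\ge 3$ — either by a direct calculus/positivity argument on the rational function in $q^m$, as in the Grassmann case in Lemma 3.3, or by checking that the resulting polynomial equation in the single variable $X=q^m$ has no admissible integer solutions. Because $q$ is restricted to two explicit values, these reduce to checking a handful of one-variable polynomial non-vanishing statements rather than a genuinely two-parameter problem.

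**The main obstacle I anticipate** is the characteristic-two case $q=4$: there $Q$ is not recoverable from $(-,-)$ in the usual way, the polarisation $(x,y)=Q(x+y)-Q(x)-Q(y)$ behaves differently, and the norm map and square classes must be handled with care, so the orbital parametrisation and the representation-number formula $\mathcal{N}$ both need their even-characteristic versions. I would treat $q=4$ and $q=5$ in parallel but flag the characteristic-two bookkeeping explicitly, and I expect the cleanest route is to keep the dependence on $\epsilon$ and on the type of the relevant hyperplane complement symbolic, verifying at the end that neither choice of $\epsilon$ nor parity of $m$ can force the two differing intersection numbers to coincide.
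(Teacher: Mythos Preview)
Your plan is sound in outline and would eventually work, but it diverges from the paper's argument in a way that creates extra labour and one genuine technical snag. The paper does \emph{not} pass to the orthogonal complement of the $2$-space $\langle x,y\rangle$; instead it fixes a hyperbolic basis $\{e_1,\ldots,e_m,f_1,\ldots,f_m\}$, places both $x$ and $y$ inside the nondegenerate $4$-space $\langle e_1,f_1,e_2,f_2\rangle$, and counts the admissible $z$ by first solving the linear constraints $(x,z)=\gamma_1$, $(z,y)=\gamma_2$ on the coordinates $\alpha_1,\beta_1,\alpha_2,\beta_2$ and then imposing $Q(z)=1$ on the remaining $(2m-4)$-dimensional hyperbolic complement. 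The number of solutions on that complement is one of two fixed constants $A,B$ (depending only on whether a certain expression vanishes), so that $p_{\gamma\gamma}^\lambda=c_{\gamma\gamma}^\lambda A+(q^2-c_{\gamma\gamma}^\lambda)B$ with $c_{\gamma\gamma}^\lambda$ a quantity that depends only on $q,\gamma,\lambda$ and \emph{not on $m$}. Strong regularity then fails as soon as two values $c_{\gamma\gamma}^{\lambda_1}\ne c_{\gamma\gamma}^{\lambda_2}$ differ, a purely finite check at $q=4,5$; no polynomial-in-$X=q^m$ analysis is needed. The even-$q$ case is handled by replacing the discriminant root-count with the trace criterion $\mathrm{Tr}_{\F_q/\F_2}(\cdot)=0$.

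The concrete obstacle in your version is that for certain orbital indices the $2$-space $\langle x,y\rangle$ is degenerate: with $Q(x)=Q(y)=1$ and $(x,y)=\lambda$, the Gram matrix has determinant $4-\lambda^2$, which vanishes at $\lambda=\pm 2$ when $q=5$ (precisely the tangent orbital), so there is no clean orthogonal decomposition $V=\langle x,y\rangle\perp\langle x,y\rangle^\perp$ and your $\mathcal{N}$-count on the complement is not directly available there. You would have to embed in a larger nondegenerate subspace anyway, at which point you recover the paper's set-up. A second, smaller cost of your route is that the type $\epsilon$ of the $(2m-2)$-dimensional complement of $\langle x,y\rangle$ varies with $\lambda$, so the representation-number formula changes case by case; the paper's $(2m-4)$-dimensional complement has fixed type, which is what makes the $m$-dependence factor out so cleanly.
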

\begin{proof}
    First, we suppose that $q$ is odd (so $q=5$), and let $(V,Q)$ be an $n$-dimensional vector space over $\F_q$ and $Q$ the associated quadratic form of plus type. Choose a basis $\mathcal{B}^+ = \{e_1,\ldots,e_m,f_1,\ldots,f_m\}$ such that for all $1\le i,j\le m$, $(e_i,f_j)=\delta_{ij}$, $(e_i,e_j)=(f_i,f_j)=0$. The orbitals $\Delta_{\pm\lambda}$ for the action are
    \[
        \Delta_{\pm\lambda} = \{(\braket{x},\braket{y}) : Q(x) = Q(y) = 1, N((x,y))=\lambda^2 \},
    \]
    for $\lambda\in \F_q$, where $N(x)=x^2$ for all $x\in \F_q$. Choose a coset representative $\lambda$ for each coset in $\{0\}\cup \F_q^*/\{\pm1\}$; then each orbital $\Delta_{\pm \lambda}$, can be represented by $\Delta_\lambda$.
    
    Let $x=e_1+f_1$, and $y=\lambda f_1 + e_2 + f_2$ be generators for a pair of $1$-spaces $(\braket{x},\braket{y})$ with $Q(x)=Q(y)=1$, and $(x,y)=\lambda$. We count the number of $\braket{z}$ with $Q(z)=1$ such that $(\braket{x},\braket{z})\in\Delta_{\gamma_1}$ and $(\braket{z},\braket{y})\in \Delta_{\gamma_2}$; this computes the intersection number $p_{\gamma_1\gamma_2}^\lambda$. To do this, by the choice of coset representatives above, we count vectors $z$, with $Q(z)=1$, such that $(x,z)=\gamma_1$, $(z,y)=\gamma_2$. By letting $z=\sum_i\, (\alpha_ie_i+\beta_if_i)$, one sees that $\alpha_1+\beta_1=\gamma_1$, and hence there are $q$ choices for the pair $(\alpha_1.\beta_1)$. In the same way, there are $q$ pairs $(\alpha_2,\beta_2)$ satisfying $\alpha_2+\beta_2 = \gamma_2 - \alpha_1\lambda$. Finally, we impose the condition $Q(z)=1$, which is equivalent to\
    \begin{equation}
        \bsum_{i=3}^m\, \alpha_i\beta_i = 1-\alpha_1\beta_1-\alpha_2\beta_2.
    \end{equation}
    The number of vectors $v = \sum_{i=3}^m\, (\alpha_ie_i+\beta_i f_i)\in\F_q^{2m-4}$ satisfying this equation depends on whether $1-\alpha_1\beta_1-\alpha_2\beta_2$ is zero, or non-zero; denote these numbers by $A$, and $B$ respectively. These values are given by $A = (q^m-1)(q^{m-1}+1)$ and $B = q^{2m-1}-q^{m-1}$, see \cite[Theorem 26.6]{comb}.
    
    For each $\gamma\in \F_q$, let $X_\gamma := \{(x,y)\in\F_{q}^2 : x+y=\gamma\}$, and $\phi_\gamma : X_\gamma\to \F_q$, be defined by sending $(x,y)\mapsto xy$, and let $\chi$ be the quadratic character, which sends $x\in \F_q^*$ to $1$ if $x$ is a square, to $-1$ if $x$ is a non-square, and sends $0$ to $0$. Also, we define the function $m(\alpha_1):=1-\alpha_1\gamma_1+\alpha_1^2$. We count the number $r(\alpha_1)$, of solutions $(\alpha_2,\beta_2)\in \phi_{\gamma_2-\lambda\alpha_1}^{-1}(-m(\alpha_1))$; this is the number of roots of $P(T):=T^2-(\gamma_2-\lambda\alpha_1)T+m(\alpha_1)$. By letting $\Delta(\alpha_1)$ be the discriminant of $P(T)$, and summing over $\alpha_1$, we see that the number of $z$, satisfying $(2)$ is given by
    \[
        c_{\gamma_1\gamma_2}^\lambda:=\bsum_{\alpha_1\in \F_q}\, r(\alpha_1) = \bsum_{\alpha_1\in \F_q}\, (1+\chi(\Delta(\alpha_1)) = \begin{cases}
            q - \chi(\lambda^2-4), \quad  D(\Delta(\alpha_1)) \ne 0 \\
            q - \chi(\lambda^2-4)(q-1), \quad D(\Delta(\alpha_1)) = 0,
        \end{cases}
    \]
    where $D(\Delta(\alpha_1))$ is the discriminant of $\Delta(\alpha_1)$ when viewed as a quadratic in $\alpha_1$. In particular, for fixed $\gamma_1$, $\gamma_2$ and $\lambda$, we have that $p_{\gamma_1\gamma_2}^\lambda=c_{\gamma_1\gamma_2}^\lambda A + (q^2-c_{\gamma_1\gamma_2}^\lambda)B$. Let $\gamma\in \F_q$ and let $\gamma_1=\gamma_2=\gamma$; we see that the graph of $\Delta_\gamma$ is strongly regular if and only if either $A=B$, or for all $\lambda_1$, $\lambda_2$, neither equal to $\gamma$, we have that $c_{\gamma\gamma}^{\lambda_1} = c_{\gamma\gamma}^{\lambda_2}$. Computing the values of $c_{\gamma\gamma}^\lambda$ in the $q=5$ case, we see that there exist $\lambda_1$, and $\lambda_2$, neither equal to $\gamma$, with $c_{\gamma\gamma}^{\lambda_1} \ne c_{\gamma\gamma}^{\lambda_2}$. Hence, since $A\ne B$ for prime powers $q$, the graph of $\Delta_\gamma$ is not strongly regular; therefore, none of the orbital graphs are strongly regular.
    
The case where $Q$ is a quadratic form of minus type is similar. Let $\mathcal{B}^- = \mathcal{B}^+\cup\{u,v\}\setminus \{e_m,f_m\}$ be a basis where for all $1\le i\le m-1$, we have $(u,e_i)=(v,e_i)=(u,f_i)=(v,f_i)=0$, and $(Q(u),Q(v),(u,v))=(1,\zeta,1)$, where $t^2+t+\zeta$ is an irreducible polynomial over $\F_q$. Taking $x$ and $y$ as before, we count the number of $z =au+bv + \sum_i\, (\alpha_ie_i+\beta_if_i)$ satisfying $a^2+ab+b^2\zeta+\sum_{i=3}^{m-1}\, \alpha_i\beta_i = 1-\alpha_1\beta_1-\alpha_2\beta_2$; this time if the right hand side is zero, let the number of solutions be $A^-$, and if the right hand side is non-zero, let the number of solutions be $B^-$. By the same argument as in the case of a quadratic form of plus type, the graph of the orbital $\Delta_\lambda$ is strongly regular precisely when $A^-=B^-$, which never happens. 

Now suppose that $q$ is even (so $q=4$), and $(V,Q)$ is an $n$-dimensional vector space with standard basis $\mathcal{B}^+$, and $Q$ is quadratic form of plus type where for $1\le i \le n$, $Q(e_i)=Q(f_i)=0$. Here, the orbitals are given by
    \[
        \Delta_{\lambda} = \{(\braket{x},\braket{y}) : Q(x) = Q(y) = 1, (x,y)=\lambda \},
    \]
    for $\lambda\in \F_q$. By the same argument as the odd $q$ case, we take $(x,y)=\lambda$, and count the vectors $z$, such that $(x,z)=\gamma_1$, and $(z,y)=\gamma_2$, which again amounts to computing $c_{\gamma_1\gamma_2}^\lambda := \sum_{x\in\F_q}\, r(x)$, where $r(x)$ is the number of roots of $P(T) = T^2 - k(x)T + m(x)$, with $k(x)=\gamma_2 - \lambda x$, and $m(x)$ as above. Thus we obtain the intersection number $p_{\gamma\gamma}^\lambda = c_{\gamma\gamma}^\lambda A + (q^2-c_{\gamma\gamma}^\lambda)B$. Now we compute $c_{\gamma\gamma}^\lambda$. When $k(x)\ne 0$, let $y(x) = m(x)/k^2(x)$ and $t=k(x)T$, so that the number of roots of $P(T)$ is equal to the number of roots of $P(t) = t^2 - t + y(x)$, which has two distinct roots whenever $\mathrm{Tr}_{\F_q/\F_2}(y(x)) = 0$. Let $\Psi : \F_q\to \{0,\pm 1\}$ be defined by
    \[
        x \mapsto \begin{cases}
            1,\quad \mathrm{Tr_{\F_q/\F_2}}(x) = 0 \\
            0, \quad k(x) = 0 \\
            -1 \quad \text{otherwise.}
        \end{cases}
    \]
    Hence,
    \[
        c_{\gamma\gamma}^\lambda = \sum_{x\in \F_q}\, (1 + \Psi(y(x))) = q + \sum_{x\in \F_q}\, \Psi(y(x)).
    \]
    In the case that $q=4$, we compute $c_{\gamma\gamma}^\lambda$ for each $\lambda$ and $\gamma$, and in the same way as above, there are no strongly regular graphs over $\F_4$. The case where $Q$ is a quadratic form of minus type is similar.
\end{proof}
This completes the proof of Theorem $1.2$. \qed

\vspace{5cm}
\begin{center}
Department of Mathematics, Imperial College London, SW7 2AZ, UK 

will.allen21@imperial.ac.uk
\end{center}


\begin{thebibliography}{}
    \bibitem{praeger} A.D. Cannon, J. Bamberg, C.E. Praeger,
    A classification of the stongly regular generalised Johnson graphs,
    \textit{Ann. Comb.} \textbf{16} \textit{(2012) 489-506.}

    \bibitem{srg} A.E. Brouwer, H.V. Maldeghem, 
     Strongly Regular Graphs,
     \textit{Cambridge University Press, 2021.}

     \bibitem{drg}
     A.E. Brouwer, A.M. Cohen, A. Neumaier,
     Distance Regular Graphs,
     \textit{Springer-Verlag, 1989.}

     \bibitem{cuypers}
     H. Cuypers,
     Geometries and permutation groups of small rank,
     \textit{PhD Thesis, University of Utrecht, 1989.}

     \bibitem{gap}
    The GAP Group, GAP -- Groups, Algorithms, and Programming, Version 4.15.1;2025,
    \textit{https://www.gap-system.org.}

     \bibitem{comb}
     J.H. van Lint, R.M. Wilson,
     A Course in Combinatorics,
     \textit{Cambridge University Press, 2001.}

     \bibitem{muzy}
     M. Muzychuk, and P. Spiga,
     Finite primitive groups of small rank: symmetric and sporadic groups,
     \textit{J. Alg. Comb} \textbf{52} \textit{(2020), 103-136.}
     
     \bibitem{grape}
     L. Soicher,
     GRAPE, A Package for GAP,
     \textit{https://docs.gap-system.org/pkg/grape/doc/manual.pdf.}

     \bibitem{vauk}
     A. Vauhkonen, 
     The primitive rank 4 permutation representations of the finite linear groups
     \textit{Proc. Roy. Irish Acad. Sect. A} \textbf{91} \textit{(1991) p.181-194.}
\end{thebibliography}
\end{document}